\documentclass[reqno]{amsart}
\usepackage{amsthm,amsmath,amssymb}
\usepackage{stmaryrd,mathrsfs}
\usepackage[T1]{fontenc}
\usepackage{esint}
\usepackage{tikz}

\allowdisplaybreaks


%
%


\newcommand{\abs}[1]{|#1|}
\newcommand{\Babs}[1]{\Big|#1\Big|}

\newcommand{\Norm}[2]{\|#1\|_{#2}}

\newcommand{\BNorm}[2]{\Big\|#1\Big\|_{#2}}
\newcommand{\pair}[2]{\langle #1,#2 \rangle}
\newcommand{\Bpair}[2]{\Big\langle #1,#2 \Big\rangle}

\newcommand{\Cave}[1]{\langle\!\langle #1\rangle\!\rangle}

\newcommand{\lspan}[0]{\operatorname{span}}

\newcommand{\kernel}[0]{\mathsf{N}}
\newcommand{\range}[0]{\mathsf{R}}



\newcommand{\R}{\mathbb{R}}

\newcommand{\N}{\mathbb{N}}


\newcommand{\eps}[0]{\varepsilon}


\swapnumbers \numberwithin{equation}{section}

\theoremstyle{plain}
\newtheorem{theorem}[equation]{Theorem}
\newtheorem{proposition}[equation]{Proposition}
\newtheorem{corollary}[equation]{Corollary}
\newtheorem{lemma}[equation]{Lemma}

\theoremstyle{definition}
\newtheorem{definition}[equation]{Definition}

\theoremstyle{remark}
\newtheorem{remark}[equation]{Remark}
\newtheorem{example}[equation]{Example}

\makeatletter
\@namedef{subjclassname@2010}{%
  \textup{2010} Mathematics Subject Classification}
\makeatother

%
%

\begin{document}

\title{Reduced inequalities for vector-valued functions}

\begin{abstract}
Building on the notion of convex body domination introduced by Nazarov, Petermichl, Treil, and Volberg, we provide a general principle of bootstrapping bilinear estimates for scalar-valued functions into vector-valued versions with a reduced right-hand side involving iterated norms of a pointwise dot product $\vec f(x)\cdot\vec g(y)$ instead of the product of lengths $\abs{\vec f(x)}\abs{\vec g(y)}$ that would result from a na\"ive extension of the scalar inequality. On the way, we study connections between convex body domination and tensor norms. In order to cover the full regime of $L^p$ norms, also with $p<1$, that naturally arise in bilinear harmonic analysis, we develop a framework in general quasi-normed spaces. A key application is a vector-valued Kato--Ponce inequality (or fractional Leibnitz rule) with a reduced right-hand side, which we obtain as a soft corollary of the known scalar-valued version and our general bootstrapping method.
\end{abstract}

\author[T.~P.\ Hyt\"onen]{Tuomas P.\ Hyt\"onen}
\address{Department of Mathematics and Systems Analysis, Aalto University, P.O. Box~11100 (Otakaari~1), FI-00076 Aalto, Finland}
\email{tuomas.p.hytonen@aalto.fi}

\date{}

\thanks{The author was supported by the Research Council of Finland (project 346314: Finnish Centre of Excellence in Randomness and Structures, ``FiRST'')}

\keywords{Bilinear operator, convex body domination, quasi-norm, tensor product, vector-valued function}
\subjclass[2010]{15A69, 26D15, 47G10}



\maketitle

\section{Introduction}\label{sec:intro}

As a tool for proving weighted norm inequalities with matrix weights, Nazarov, Petermichl, Treil, and Volberg \cite{NPTV:convex} introduced the notion of {\em convex body domination}, which elaborates on Lerner's \cite{Lerner:simple} celebrated idea of {\em sparse domination} by capturing more precise directional information of vector-valued functions. For the action of the Hilbert transform and other Calder\'on--Zygmund operators on such functions, this method allowed Nazarov et al.~\cite{NPTV:convex} to prove the best available matrix-weighted estimate, which was subsequently shown to be sharp by Domelevo, Petermichl, Treil, and Volberg \cite{DPTV}. 

Sparse domination of a bilinear form $\tau:F\times G\to\R$ refers to majorisation of the form
\begin{equation}\label{eq:sparseDom}
  \abs{\tau(f,g)}\leq C\sum_{Q\in\mathscr S}\Norm{f}{X(Q)}\Norm{g}{Y(Q)},
\end{equation}
where $\mathscr S$ is a {\em sparse} collection of dyadic cubes (i.e., each $Q\in\mathscr S$ has a {\em major subset} $E_Q\subseteq Q$ with $\abs{E_Q}\geq\gamma\abs{Q}$, and these subsets $E_Q$ are pairwise disjoint), and $X(Q)$ and $Y(Q)$ are normed spaces indexed by all dyadic cubes $Q$. Here the collection $\mathscr S$, but not the quantitative parameters $\gamma$ and $C$, is allowed to depend on the functions $f$ and $g$.
Following a recent abstraction by the author~\cite{Hyt:convex}, convex body domination of \cite{NPTV:convex} is based on the idea that, when the bilinear form is extended to act on vectors $\vec f\in F^n$ and $\vec g\in G^n$ via
\begin{equation*}
  \tau(\vec f,\vec g):=\sum_{i=1}^n\tau(f_i,g_i),
\end{equation*}
one should aim for replacing each product of norms
\begin{equation*}
  \Norm{f}{X}\Norm{g}{Y}
  =\sup\{\pair{f}{x^*}\pair{g}{y^*}:x^*\in\bar B_{X^*},y^*\in\bar B_{Y^*}\}
\end{equation*}
by the quantity
\begin{equation}\label{eq:theQuantity}
  \sup\Big\{\sum_{i=1}^n\pair{f_i}{x^*}\pair{g_i}{y^*}:x^*\in\bar B_{X^*},y^*\in\bar B_{Y^*}\Big\},
\end{equation}
where $\bar B_{Z^*}$ is the closed unit ball of the normed dual $Z^*$ of $Z\in\{X,Y\}$.
In \cite{Hyt:convex}, the set inside the supremum is interpreted as the Minkowski dot product $A\cdot B=\{\vec a\cdot\vec b:\vec a\in A,\vec b\in B\}$ of the convex bodies (hence the name!)
\begin{equation*}
  \Cave{\vec f}_X:=\big\{(\pair{f_i}{x^*})_{i=1}^n:x^*\in\bar B_{X^*}\big\},\quad
  \Cave{\vec g}_Y:=\big\{(\pair{g_i}{x^*})_{i=1}^n:y^*\in\bar B_{Y^*}\big\}\subseteq\R^n;
\end{equation*}
then $\Cave{\vec f}_X\cdot\Cave{\vec g}_Y$ is a symmetric interval of the form $[-c,c]\subseteq\R$ that one can identify with its right end-point $c$, which is the supremum in \eqref{eq:theQuantity}. This is indeed an abstraction of the earlier studies that dealt with the case $X=Y=L^1(Q)$ \cite{CDPO:unif,NPTV:convex} and $X,Y\in\{L^p(Q):p\in[1,\infty)\}$ \cite{DPHL,MRR:22}.
However, the reader may also recognise \eqref{eq:theQuantity} as the injective norm $\Norm{\vec f\odot\vec g}{X\otimes_\eps Y}$ of the ``tensor dot product''
\begin{equation*}
  \vec f\odot\vec g:=\sum_{i=1}^n f_i\otimes g_i\in X\otimes Y;
\end{equation*}
see e.g. \cite[Chapter 3]{Ryan:book}.

It is an outstanding open problem whether sparse domination of a given operator always implies convex body domination. Starting with \cite{NPTV:convex}, this has been verified in several concrete cases \cite{CDPO:unif,DPHL,Hyt:convex,Lauk:BRS,MRR:22} by revisiting the proof of the respective sparse domination result. Very roughly speaking, and certainly cutting some corners, these proofs work as follows:
\begin{enumerate}
  \item Behind each proof of \eqref{eq:sparseDom}, one identifies an implicit decomposition
\begin{equation*}
  \tau(f,g)=\sum_{Q\in\mathscr S}\tau_Q(f,g),
\end{equation*}
where each $\tau_Q$ is a bilinear form with the simple estimate
\begin{equation}\label{eq:basicEst}
  \abs{\tau_Q(f,g)}\lesssim\Norm{f}{X(Q)}\Norm{g}{X(Q)}.
\end{equation}
  \item An estimate of the form \eqref{eq:basicEst} bootstraps to the vector-valued estimate
\begin{equation}\label{eq:convexEst}
  \abs{\tau_Q(\vec f,\vec g)}\lesssim\Cave{\vec f}_{X(Q)}\cdot\Cave{\vec g}_{Y(Q)}=\Norm{\vec f\odot\vec g}{X(Q)\otimes_\eps Y(Q)};
\end{equation}
this is \cite[Proposition 4.2]{Hyt:convex}, where the implied constant may depend on the length $n$ of the vectors.
\end{enumerate}

While both upper bounds in \eqref{eq:convexEst} may seem a bit exotic, it is useful to observe the important special case
\begin{equation}\label{eq:iterLpLq}
  \Norm{\vec f\odot\vec g}{L^p\otimes_\eps L^q}\leq\min\Big\{ \Norm{\vec f\odot\vec g}{L^p(L^q)}, \Norm{\vec f\odot\vec g}{L^q(L^p)}\Big\},
\end{equation}
where on the right we have the iterated $L^p(L^q)$ (or $L^q(L^p)$) norm of
\begin{equation*}
  (\vec f\odot\vec g)(x,y)=\vec f(x)\cdot\vec g(y);
\end{equation*}
thus the upper bound in \eqref{eq:iterLpLq} is smaller than $\Norm{\vec f}{L^p}\Norm{\vec g}{L^q}$ by the amount that the dot product $\vec f(x)\cdot\vec g(y)$ is smaller than the product of the lengths $\abs{\vec f(x)}\abs{\vec g(y)}$ when integrated over all $(x,y)$. In fact, for any fixed dimension $n$, the two sides of \eqref{eq:iterLpLq} are comparable; see Propositions \ref{prop:equiNormsBanach} and \ref{prop:equiNormsQBanachFn}.

This observation shows that, aside from the previously explored applications to convex body domination, the basic implication \eqref{eq:basicEst} $\Rightarrow$  \eqref{eq:convexEst} of \cite[Proposition 4.2]{Hyt:convex} already allows the bootstrapping of various bilinear $L^p$-estimates of classical analysis. For example, with $\tau(f,g)=\int fg$, we find that the usual H\"older's inequality
\begin{equation*}
  \Babs{\int fg}\leq\Norm{f}{L^p}\Norm{g}{L^{q}},\quad \frac1p+\frac1q=1,
\end{equation*}
implies the vector-valued improvement
\begin{equation}\label{eq:betterHolder}
  \Babs{\int\vec f\cdot\vec g}
  \lesssim\begin{cases} \Norm{(x,y)\mapsto \vec f(x)\cdot\vec g(y) }{L^p_x L^q_y }, \\ 
   \Norm{(x,y)\mapsto \vec f(x)\cdot\vec g(y) }{L^q_y L^p_x },\end{cases}
\end{equation}
where the implied constant depends only on the dimension $n$ of the vectors. On the other hand, such a dependence is easily seen to be necessary:

\begin{example}\label{ex:needDim}
Let $\vec f=\vec g=\sum_{i=1}^n 1_{[i-1,i)}\vec e_i$, where $\vec e_i$ are orthonormal. Then
\begin{equation*}
  \vec f(x)\cdot\vec g(y)=\sum_{i,j=1}^n 1_{[i-1,i)}(x)1_{[j-1,j)}(y)\vec e_i\cdot\vec e_j
  =\sum_{i=1}^n 1_{[i-1,i)}(x)1_{[i-1,i)}(y).
\end{equation*}
Thus
\begin{equation*}
  \int \vec f\cdot\vec g=\int 1_{[0,n)}=n
\end{equation*}
while
\begin{equation*}
\begin{split}
  &\Norm{(x,y)\mapsto \vec f(x)\cdot\vec g(y)}{L^p_xL^q_y}=\Norm{1_{[0,n)}}{L^p}=n^{1/p}, \\
  &\Norm{(x,y)\mapsto \vec f(x)\cdot\vec g(y)}{L^q_yL^p_x}=\Norm{1_{[0,n)}}{L^q}=n^{1/q},
\end{split}
\end{equation*}
which are both $o(n)$ as $n\to\infty$ for each $p\in(1,\infty)$.
\end{example}

The aim of this paper is to place this vector-valued bootstrapping phenomenon into a more general framework so as to cover a larger class of non-trivial applications. In particular, in place of mere bilinear forms (mappings into $\R$), we wish to cover bilinear operators, with a function space target such as $L^p$. For such operators, there are multiple interesting examples in harmonic analysis, where the target space exponent may be in the range $p\in(0,1)$. While most of the interesting bilinear operators still have domain spaces in the usual regime $p>1$, the basic question of a vector-valued version
\begin{equation}\label{eq:HolderVec}
  \Norm{x\mapsto\vec f(x)\cdot\vec g(x)}{L^r_x}\overset{?}{\lesssim}
  \Norm{(x,y)\mapsto \vec f(x)\cdot\vec g(y)}{L^p_x(L^q_y)}
\end{equation}
 of the general H\"older's inequality
\begin{equation}\label{eq:HolderSca}
  \Norm{fg}{r}\leq\Norm{f}{p}\Norm{g}{q},\quad \frac1p+\frac1q=\frac1r,\quad p,q,r\in(0,\infty],
\end{equation}
where all exponents may be smaller than $1$, suggests that the result should find its natural generality entirely within the setting of quasi-normed spaces. Besides a constant in the triangle inequality, we should recall that the $L^p$ spaces with $p\in(0,1)$ suffer from a total lack of nontrivial bounded linear functionals, i.e., the dual space is $(L^p)^*=\{0\}$. This makes the quantity \eqref{eq:theQuantity} completely useless, and requires us to seek for an alternative framework. 

Yet another level of desirable generality comes from examples in the style of the Kato--Ponce inequality (or fractional Leibnitz rule), in its  original form from \cite{KatoPonce}, but we state here a version obtained by Grafakos and Oh \cite{GraOh:14}:

\begin{theorem}[\cite{GraOh:14}, Theorem 1]\label{thm:GraOh}
Let $p_k,q_k\in(1,\infty]$ and $r\in(\frac12,\infty]$ satisfy $\frac1r=\frac{1}{p_k}+\frac{1}{q_k}$ for $k=0,1$, and let $s>d(\frac1r-1)_+$ or $s\in 2\N$. Then
\begin{equation}\label{eq:GraOh}
  \Norm{D^s(fg)}{L^r(\R^d)}
  \lesssim\Norm{D^s f}{L^{p_0}(\R^d)}\Norm{g}{L^{q_0}(\R^d)}+\Norm{f}{L^{p_1}(\R^d)}\Norm{D^s g}{L^{q_1}(\R^d)}
\end{equation}
for both $D\in\{(-\triangle)^{s/2},(I-\triangle)^{s/2}\}$ and all Schwartz test functions $f,g\in\mathscr S(\R^d)$, where the implied constant may depend on all other parameters, but not on the functions $f$ and $g$.
\end{theorem}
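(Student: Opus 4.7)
The plan is to follow the classical paraproduct approach to the Kato--Ponce inequality, as refined over the years to cover the full range $r\in(\tfrac12,\infty]$. I would first treat the homogeneous operator $D=(-\triangle)^{s/2}$; the inhomogeneous case $D=(I-\triangle)^{s/2}$ reduces to it by splitting into low and high frequencies and handling the low-frequency contribution directly through the boundedness of $(I-\triangle)^{s/2}$ on $L^p$ for smooth functions. Fix smooth Littlewood--Paley projections $\Delta_j$ onto $\abs{\xi}\sim 2^j$ and write $S_j=\sum_{k\leq j-3}\Delta_k$, so that Bony's decomposition reads
\begin{equation*}
  fg=\sum_j S_jf\cdot\Delta_jg+\sum_j\Delta_jf\cdot S_jg+\sum_j\Delta_jf\cdot\widetilde\Delta_jg=:\Pi_1+\Pi_2+\Pi_3,
\end{equation*}
where $\widetilde\Delta_j=\sum_{\abs{k-j}\leq 2}\Delta_k$.

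The off-diagonal pieces $\Pi_1$ and $\Pi_2$ are treated symmetrically. Since the $j$th summand of $\Pi_1$ has Fourier support in an annulus $\abs{\xi}\sim 2^j$, the operator $D^s$ essentially commutes with the outer sum up to harmless error, and the Littlewood--Paley (or $H^r$, when $r\leq 1$) square function characterisation of $L^r$ combined with the pointwise bound $\abs{S_jf}\lesssim Mf$ yields
\begin{equation*}
  \Norm{D^s\Pi_1}{L^r}
  \lesssim \BNorm{\Big(\sum_j\abs{S_jf}^2\abs{D^s\Delta_jg}^2\Big)^{1/2}}{L^r}
  \lesssim \Norm{Mf}{L^{p_1}}\Norm{S(D^sg)}{L^{q_1}}
  \lesssim \Norm{f}{L^{p_1}}\Norm{D^sg}{L^{q_1}}
\end{equation*}
via H\"older's inequality, boundedness of the Hardy--Littlewood maximal function on $L^{p_1}$ (where $p_1>1$), and the $L^{q_1}$ bound on the Littlewood--Paley $g$-function. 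The symmetric estimate for $\Pi_2$ produces the second summand on the right of \eqref{eq:GraOh}.

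The main obstacle is the diagonal piece $\Pi_3$ in the regime $r\in(\tfrac12,1]$: the summands $\Delta_jf\cdot\widetilde\Delta_jg$ have Fourier support only in a ball $\abs{\xi}\lesssim 2^j$ rather than an annulus, so the previous almost-orthogonality argument fails, and this is precisely where the hypothesis $s>d(\tfrac1r-1)_+$ enters. My plan is to decompose each summand via a smooth frequency cut-off $\Id=P_{j+5}+(\Id-P_{j+5})$ where $P_k$ localises to $\abs{\xi}\lesssim 2^k$. The high-frequency portion $(\Id-P_{j+5})(\Delta_jf\cdot\widetilde\Delta_jg)$ is now annularly supported and is handled as in $\Pi_1$. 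For the low-frequency portion, Bernstein's inequality gives $\Norm{D^sP_{j+5}(\Delta_jf\cdot\widetilde\Delta_jg)}{L^r}\lesssim 2^{js}\Norm{\Delta_jf\cdot\widetilde\Delta_jg}{L^r}$, H\"older distributes each scale into $L^{p_0}\times L^{q_0}$, and summation in $j$ is carried out using the $r$-subadditivity $\Norm{\sum_ju_j}{L^r}^r\leq\sum_j\Norm{u_j}{L^r}^r$ for $r\leq 1$. A further application of the Fefferman--Stein vector-valued maximal inequality controls the resulting $\ell^r_j$ sums, and the geometric series in $j$ converges precisely under the assumption $s>d(\tfrac1r-1)$, giving a bound by $\Norm{f}{L^{p_0}}\Norm{D^sg}{L^{q_0}}$.

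Finally, when $s\in 2\N$, both $(-\triangle)^{s/2}$ and $(I-\triangle)^{s/2}$ are genuine differential operators, so the classical Leibniz rule expands $D^s(fg)$ into a finite linear combination of products $\partial^\alpha f\cdot\partial^\beta g$ with $\abs{\alpha}+\abs{\beta}\leq s$. H\"older's inequality, combined with the Riesz-transform equivalence $\Norm{\partial^\alpha f}{L^p}\sim\Norm{(-\triangle)^{\abs{\alpha}/2}f}{L^p}$ for $p>1$ and a Gagliardo--Nirenberg-type interpolation for intermediate fractional orders, reassembles the right-hand side in the form \eqref{eq:GraOh}; this argument is insensitive to whether $r$ lies in $(\tfrac12,1]$ or beyond, which explains why the threshold condition $s>d(\tfrac1r-1)_+$ can be bypassed for even integers.
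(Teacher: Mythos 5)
The paper does not prove Theorem~\ref{thm:GraOh} at all: the bracketed attribution ``[\cite{GraOh:14}, Theorem 1]'' in the theorem heading marks it as an imported result of Grafakos and Oh, quoted verbatim and used as a black box. The whole point of the paper's Kato--Ponce application is that the vector-valued refinement \eqref{eq:GraOhVec} follows ``as a soft corollary'' of the \emph{known} scalar inequality via the abstract bootstrapping machinery of Sections~\ref{sec:bilin1}--\ref{sec:bilin2}; reproving the scalar inequality from scratch is deliberately avoided. So there is no in-paper proof to compare your argument against.

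As for the content of your sketch: it is a reasonable outline of the paraproduct route to the scalar Kato--Ponce inequality, and the broad strokes (Bony decomposition, treating $\Pi_1,\Pi_2$ via square-function bounds and the maximal function, isolating the diagonal term $\Pi_3$ as the source of the threshold $s>d(\tfrac1r-1)_+$, a separate Leibniz argument for $s\in 2\N$) are consistent with how such results are established in the literature. Two points you should be aware of, though. First, in the $\Pi_3$ analysis you invoke Bernstein, $r$-subadditivity, and Fefferman--Stein, but the step where ``the geometric series in $j$ converges precisely under $s>d(\tfrac1r-1)$'' is asserted without exhibiting where the decaying factor $2^{-(s-d(1/r-1))\,|j-k|}$ actually comes from; this requires a further frequency decomposition of each low-frequency piece and careful bookkeeping, and is the technically delicate heart of the $r<1$ case, not a routine consequence of the ingredients you list. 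Second, your treatment of $s\in 2\N$ via classical Leibniz plus Gagliardo--Nirenberg is plausible but leaves unaddressed why the intermediate-derivative terms can be reassembled into the two-term right-hand side under only $r>\tfrac12$ with no lower bound on $s$; Grafakos and Oh's argument for this case is genuinely different in flavor from the fractional case and relies on a specific algebraic structure of powers of the Laplacian. If your goal is to reprove the Grafakos--Oh theorem, those are the two places needing substantial additional work; if your goal is to understand the present paper, you should simply take Theorem~\ref{thm:GraOh} as given.
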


On the left, we have a bilinear operator $\tau(f,g)=D^s(fg)$ from $\mathscr S(\R^d)\times\mathscr S(\R^d)$ into a quasi-normed space $L^r(\R^d)$; on the right, we have an upper bound consisting of a sum of two products of norms
\begin{equation}\label{eq:sumBound}
  \Norm{f}{X_0}\Norm{g}{Y_0}+\Norm{f}{X_1}\Norm{g}{Y_1},
\end{equation}
which could of course be dominated by the single product
\begin{equation*}
  \Norm{f}{X_0\cap X_1}\Norm{g}{Y_0\cap Y_1}:=(\Norm{f}{X_0}+\Norm{f}{X_1})(\Norm{g}{Y_0}+\Norm{g}{Y_1}),
\end{equation*}
but may in general be much smaller. Hence, the question of a vector-valued extension
\begin{equation}\label{eq:GraOhVec}
\begin{split}
  \Norm{x\mapsto D^s(\vec f\cdot\vec g)(x)}{L^r_x}
  &\overset{?}{\lesssim}\Norm{(x,y)\mapsto D^s \vec f(x)\cdot \vec g(y)}{L^{p_0}_x(L^{q_0}_y)} \\
  &\qquad+\Norm{(x,y)\mapsto \vec f(x)\cdot D^s\vec g(y)}{L^{p_1}_x(L^{q_1}_y)}
\end{split}
\end{equation}
of \eqref{eq:GraOh} will depend on understanding bilinear forms $\tau:X_0\cap X_1\times Y_0\cap Y_1\to Z$ with an upper bound involving two incomparable terms as in \eqref{eq:sumBound}. As a related side-remark, we mention that it seems convenient to develop the theory in (not necessarily complete) quasi-normed spaces rather than (complete by definition) quasi-Banach spaces: on the one hand, there seems to be little need for completeness in our essentially finite considerations; on the other hand, this framework allows us to directly apply the theory to the setting of Theorem \ref{thm:GraOh} with clearly incomplete spaces like
\begin{equation*}
    X_0=\mathscr S(\R^d),\quad\Norm{f}{X_0}=\Norm{D^s f}{L^{p_0}(\R^d)}
\end{equation*}
without a need to first worry about extending \eqref{eq:GraOh} from the initial test class to a complete space.

Besides the application to the vector-valued Kato--Ponce inequality, certainly of interest in its own right, studying the vector-valued extension of bilinear operators with the double norm bound \eqref{eq:sumBound} may serve as a toy model towards understanding the question of  convex body extensions of the far more complicated sparse bounds \eqref{eq:sparseDom} in the future.

We conclude this introduction by stating a version of our main result in the special case of $L^p$ spaces. Even the statement of the general case, to be proved in the body of the paper, will require additional preliminaries.

\begin{theorem}\label{thm:mainLp}
Let $p_k,q_k,r\in(0,\infty]$, let $F$ and $G$ be vector spaces, let $A_k:F\to L^{p_k}$ and $B_k:G\to L^{q_k}$ be linear operators, and
let $\tau:F\times G\to L^r$ is a bilinear operator with bound
\begin{equation}\label{eq:mainLpSca}
  \Norm{\tau(f,g)}{L^r}\lesssim \Norm{A_0 f}{L^{p_0}}\Norm{B_0g}{L^{q_0}}+\Norm{A_1f}{L^{p_1}}\Norm{B_1g}{L^{q_1}}.
\end{equation}
Then its vector-valued extension satisfies the bound
\begin{equation}\label{eq:mainLpVec}
  \Norm{\tau(\vec f,\vec g)}{L^r}\lesssim \sum_{k=0}^1 \Norm{(x,y)\mapsto A_k\vec f(x)\cdot B_k\vec g(y) }{L^{p_k}_x(L^{q_k}_y)},
\end{equation}
as well as each of the bounds where either or both $L^{p_k}_x(L^{q_k}_y)$ on the right is replaced by $L^{q_k}_y(L^{p_k}_x)$.
\end{theorem}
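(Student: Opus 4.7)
My plan is to deduce Theorem~\ref{thm:mainLp} from an abstract bootstrap in the spirit of \cite[Proposition~4.2]{Hyt:convex}, but adapted to (i) bilinear operators into a quasi-normed target, and (ii) a sum of two incomparable products of norms on the right-hand side; I expect both of these extensions to be the technical core carried out in the body of the paper.

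Granting such a bootstrap, I would first recast the scalar hypothesis \eqref{eq:mainLpSca} by introducing the pulled-back quasi-seminorms $\Norm{f}{X_k}:=\Norm{A_k f}{L^{p_k}}$ and $\Norm{g}{Y_k}:=\Norm{B_k g}{L^{q_k}}$, turning \eqref{eq:mainLpSca} into
\[
  \Norm{\tau(f,g)}{L^r}\lesssim \Norm{f}{X_0}\Norm{g}{Y_0}+\Norm{f}{X_1}\Norm{g}{Y_1}.
\]
The sum-of-two, quasi-Banach-target version of the bootstrap should then deliver
\[
  \Norm{\tau(\vec f,\vec g)}{L^r}\lesssim_n \sum_{k=0}^1 \Norm{\vec f\odot\vec g}{X_k\otimes_\eps Y_k},
\]
and since $X_k,Y_k$ are pullbacks along the linear operators $A_k,B_k$, the injective tensor (quasi-)norm factors through them and equals $\Norm{(A_k\vec f)\odot(B_k\vec g)}{L^{p_k}\otimes_\eps L^{q_k}}$. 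Finally, Propositions~\ref{prop:equiNormsBanach} and~\ref{prop:equiNormsQBanachFn} --- the rigorous forms of \eqref{eq:iterLpLq} --- replace this injective tensor norm by either iterated norm $L^{p_k}_x(L^{q_k}_y)$ or $L^{q_k}_y(L^{p_k}_x)$ of the pointwise dot product $(x,y)\mapsto A_k\vec f(x)\cdot B_k\vec g(y)$, yielding \eqref{eq:mainLpVec} and all its variants.

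The main obstacle will be establishing the underlying bootstrap when the target is a genuinely quasi-normed space such as $L^r$ with $r<1$. The original \cite[Proposition~4.2]{Hyt:convex} is formulated for bilinear forms (scalar-valued targets) and relies implicitly on duality: the convex body $\Cave{\vec f}_X$ is the image of $\bar B_{X^*}$ under evaluation, which trivialises when $(L^r)^*=\{0\}$. A plausible route is to run the argument entirely inside tensor products of quasi-normed spaces rather than on their duals, letting the $r$-subadditivity of $L^r$ absorb constants arising from a finite-dimensional basis change of $\lspan\{f_1,\dots,f_n\}$ and $\lspan\{g_1,\dots,g_n\}$. A secondary but essential point is that the two terms in \eqref{eq:mainLpSca} must be propagated separately through the bootstrap; collapsing them by a triangle inequality at the outset would yield an estimate only in the larger space $(X_0\cap X_1)\otimes_\eps(Y_0\cap Y_1)$, destroying the very feature that makes the conclusion sharper than the na\"ive product bound.
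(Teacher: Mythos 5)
Your high-level plan --- pull back the $L^{p_k},L^{q_k}$ quasi-norms along $A_k,B_k$ to obtain spaces $X_k,Y_k$, invoke an abstract two-term bootstrap to bound $\Norm{\tau(\vec f,\vec g)}{L^r}$ by a sum of two tensor-type quantities, and then identify those quantities with the iterated $L^p(L^q)$ norms using the lattice structure --- matches the paper's reduction of Theorem~\ref{thm:mainLp} to Theorem~\ref{thm:2termBootstrap} and Corollary~\ref{cor:equiNormsQBanachFn}. You also correctly flag that the two terms cannot be collapsed at the outset without losing the conclusion.

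However, the intermediate object you route through is wrong, and this is not a cosmetic issue. You propose the bootstrap to output $\Norm{\vec f\odot\vec g}{X_k\otimes_\eps Y_k}$, i.e.\ the injective tensor norm of the source spaces. But the degeneracy you correctly worry about for the target $Z=L^r$ afflicts the \emph{sources} just as badly: in the theorem the source exponents $p_k,q_k$ may lie in $(0,1)$, and then $(L^{p_k})^*=\{0\}$ on a non-atomic measure space, so $\Norm{\cdot}{X_k\otimes_\eps Y_k}\equiv 0$. In particular your chain is vacuous exactly for the Hölder application, Corollary~\ref{cor:mainLp}\eqref{it:Holder}, which is stated for all $p,q\in(0,\infty]$. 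Moreover, Proposition~\ref{prop:equiNormsQBanachFn} is not a bridge from the injective norm to iterated norms: in the quasi-normed setting the paper abandons the injective norm entirely and instead works with the reducing-matrix quantity $\abs{[\vec f]_{X_k}[\vec g]_{Y_k}}$, where $[\vec x]_X$ is a positive semi-definite matrix with $\Norm{\vec x\cdot\vec e}{X}\approx\abs{[\vec x]_X\vec e}$, constructed via Aoki--Rolewicz and the John-ellipsoid-type argument (Proposition~\ref{prop:John}). This is the object that a quasi-normed bootstrap (Proposition~\ref{prop:normToCave} and Theorem~\ref{thm:2termBootstrap}) produces, that coincides with the injective norm in the Banach case (Proposition~\ref{prop:equiNormsBanach}), and that Proposition~\ref{prop:equiNormsQBanachFn}/Corollary~\ref{cor:equiNormsQBanachFn} identifies with $\Norm{\vec x\odot\vec y}{X(Y)}\approx\Norm{\vec x\odot\vec y}{Y(X)}$. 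Your proposal never names this replacement, and ``running the argument inside tensor products and absorbing constants via $r$-subadditivity'' does not supply it.

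The second missing ingredient is the mechanism by which the two-term hypothesis is split. The paper's Lemma~\ref{lem:bound2eq} shows that any bilinear map $\tau:\R^m\times\R^n\to Z$ with $\Norm{\tau(x,y)}{Z}\leq\abs{A_0x}\abs{B_0y}+\abs{A_1x}\abs{B_1y}$ decomposes as $\tau=\tau_0+\tau_1$ with $\Norm{\tau_k(x,y)}{Z}\lesssim\abs{A_kx}\abs{B_ky}$ separately; the proof uses singular value decompositions, a perturbation to reduce to invertible $A_0,B_0$, and a compactness/limit argument in a finite-dimensional subspace of $Z$. This is the actual device that ``propagates the two terms separately''; without it the bootstrap for a two-term bound does not reduce to the one-term case, and your write-up gives no alternative route around it.
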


\begin{corollary}\label{cor:mainLp}\mbox{}
\begin{enumerate}
  \item\label{it:Holder} The vector-valued H\"older inequality \eqref{eq:HolderVec} holds for all $p,q,r$ as in \eqref{eq:HolderSca}.
  \item\label{it:KatoPonce} The vector-valued Kato--Ponce inequality \eqref{eq:GraOhVec} holds for all parameters as in the statement of Theorem \ref{thm:GraOh}.
\end{enumerate}
\end{corollary}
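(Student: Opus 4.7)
Both parts are straightforward specialisations of Theorem \ref{thm:mainLp}, so the plan is simply to identify the right choices of $F$, $G$, $\tau$, and the linear operators $A_k,B_k$ in each case, and then to observe that the vector-valued extension $\tau(\vec f,\vec g):=\sum_i\tau(f_i,g_i)$ reduces, thanks to linearity, to the pointwise or operator expression appearing in the target inequality.

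For part (1), I would take $F=L^p$, $G=L^q$, and let $\tau(f,g):=fg$ be the pointwise product, which maps into $L^r$ by \eqref{eq:HolderSca}. Choosing $A_0=A_1$ to be the identity on $L^p$ and $B_0=B_1$ the identity on $L^q$, with $(p_k,q_k)=(p,q)$, the scalar hypothesis \eqref{eq:mainLpSca} collapses to twice \eqref{eq:HolderSca}, which is harmless for the $\lesssim$ conclusion. The vector-valued extension is just the pointwise dot product $\tau(\vec f,\vec g)(x)=\sum_i f_i(x)g_i(x)=\vec f(x)\cdot\vec g(x)$, and \eqref{eq:mainLpVec} yields \eqref{eq:HolderVec} verbatim.

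For part (2), I would take $F=G=\mathscr S(\R^d)$ and $\tau(f,g):=D^s(fg)\in L^r(\R^d)$, with the linear maps $A_0:=D^s$, $B_0:=\id$, $A_1:=\id$, $B_1:=D^s$, mapping $\mathscr S(\R^d)$ into $L^{p_0}$, $L^{q_0}$, $L^{p_1}$, $L^{q_1}$ respectively. The scalar hypothesis \eqref{eq:mainLpSca} is then precisely the Grafakos--Oh bound \eqref{eq:GraOh}. Linearity of $D^s$ allows the summation to pass through the operator, so
\begin{equation*}
  \tau(\vec f,\vec g)=\sum_i D^s(f_ig_i)=D^s(\vec f\cdot\vec g),
\end{equation*}
and \eqref{eq:mainLpVec} translates directly into \eqref{eq:GraOhVec}.

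There is essentially no obstacle once Theorem \ref{thm:mainLp} is in hand; the statement is engineered precisely to make such corollaries mechanical. The only small points to record are that the quasi-normed (as opposed to quasi-Banach) framework is what permits the direct insertion of the incomplete domain $\mathscr S(\R^d)$ in part (2), and that allowing $p,q,r<1$ in part (1) relies on Theorem \ref{thm:mainLp} handling this regime internally, since for $p<1$ the naive dual description \eqref{eq:theQuantity} is unavailable.
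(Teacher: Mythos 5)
Your proposal is correct and follows exactly the same route as the paper: the identical choices of $F$, $G$, $\tau$, and $A_k,B_k$ in both parts, followed by observing that the vector-valued extension of $\tau$ reduces to $\vec f\cdot\vec g$ (resp.\ $D^s(\vec f\cdot\vec g)$). Nothing to add.
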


\begin{proof}\mbox{}
\begin{enumerate}
  \item With $p_k=p$, $q_k=q$, $F=L^p$, $G=L^q$, $A_k$ and $B_k$ taken to identity operators, and $\tau(f,g):=fg$ (the pointwise product), the assumption \eqref{eq:mainLpSca} is the classical H\"older inequality \eqref{eq:HolderSca}, and the conclusion \eqref{eq:mainLpVec} is the claimed vector-valued \eqref{eq:HolderVec}.
  \item We now take $F=G=\mathscr S(\R^d)$, $A_0=B_1=D^s$, $A_1=B_0=I$, and $\tau(f,g)=D^s(fg)$. Then assumption \eqref{eq:mainLpSca} is the scalar-valued Kato--Ponce inequality \eqref{eq:mainLpSca} of Theorem \ref{thm:mainLp}, and the conclusion \eqref{eq:mainLpVec} is the claimed vector-valued \eqref{eq:GraOhVec}.
\end{enumerate}
\end{proof}

From these illustrations, it should already be clear how to feed other scalar-valued inequalities into the machine, so we leave it to the reader to work out vector-valued versions of their favourite inequalities. Note in particular that, as in Corollary \ref{cor:mainLp}\eqref{it:Holder}, it often suffices to take $A_k=B_k=I$ in Theorem \ref{thm:mainLp}. The additional generality seems useful for covering examples like to Kato--Ponce inequality in Corollary \ref{cor:mainLp}\eqref{it:KatoPonce}.

The rest of this paper is structured as follows: In Section \ref{sec:tensor} we review some background about tensor products. In Section \ref{sec:convex}, we recall the relevant aspects of convex body domination and motivate our substitute notions applicable to the case of quasi-normed spaces. In the short Section \ref{sec:bilin1}, we obtain an abstract version of Theorem \ref{thm:mainLp} in the simpler case when there is only a single term on the right-hand side in both the assumption and in the conclusion. The final Section \ref{sec:bilin2} is then dedicated to lifting these considerations to the full generality of two-term upper bounds as in Theorem \ref{thm:mainLp}. The section culminates in an abstract version of Theorem \ref{thm:mainLp}, after which Theorem \ref{thm:mainLp} is a quick consequence.

\section{Tensor products of quasi-normed spaces}\label{sec:tensor}

As we hinted in the Introduction, we wish to set up an abstract framework for our analysis using the notion of tensor products of vector spaces. On the other hand, we wish to cover the case of quasi-normed spaces in general (i.e., allowing the possibility of a constant in the triangle inequality) and that of $L^p$ spaces for all $p>0$ in particular, so we cannot count on deriving any useful information out of the dual space. Various strategies of defining tensor products and related norms for quasi-Banach (and other) spaces are surveyed in \cite{Hansen:10}; however, for the larger part, the assumption of a dual space separating the points of $X$ is made, which clearly fails for our main example of $X=L^p$, $p\in(0,1)$‚ with $(L^p)^*=\{0\}$. On the other hand, our setting involves the special feature, different from the questions of typical interest in the theory of tensor product, that we are mainly interested in tensors
\begin{equation}\label{eq:fodotg}
  \omega=\vec f\odot\vec g=\sum_{i=1}^n f_i\otimes g_i,
\end{equation}
of (at most) a fixed finite length $n$. This is a significant advantage, and it therefore seems purposeful to develop the little theory that we need essentially from scratch, rather than relying on some heavy machinery built for completely different purposes.

To give a formal meaning to an expression like \eqref{eq:fodotg}, we follow \cite[Section 1.1]{Ryan:book}. Let first $X$ and $Y$ be real vector spaces (with no topology at this point, and therefore no continuity conditions in the definitions that follow). We denote by $X^{\sharp}$ and $Y^{\sharp}$ their {\em algebraic} duals, i.e., the vector spaces of linear functionals $x^{\sharp}:X\to\R$ and $y^{\sharp}:Y\to\R$. Let further $B(X,Y)$ be the space of bilinear forms $\beta:X\times Y\to\R$. For $x\in X$ and $y\in Y$, we define the elementary tensor $x\otimes y$ as a linear functional on $B(X,Y)$ by
\begin{equation*}
  (x\otimes y)(\beta):=\beta(x,y).
\end{equation*}
The (algebraic) tensor product $X\otimes Y$ is then the subspace of $B(X,Y)^{\sharp}$ spanned by the elementary tensors.

Let us then turn to discuss quasi-normed spaces $X$ and $Y$ with (possibly uninteresting) normed duals $X^*\subseteq X^{\sharp}$ and $Y^*\subseteq Y^{\sharp}$ of bounded linear functionals.
We already mentioned the injective norm
\begin{equation*}
   \Norm{\omega}{X\otimes_\eps Y}:=
   \sup\Big\{\sum_{i=1}^n\pair{f_i}{x^*}\pair{g_i}{y^*}:x^*\in\bar B_{X^*},y^*\in\bar B_{Y^*}\Big\},
\end{equation*}
which is only useful when the dual spaces are nontrivial, but we will also use of this norm to make some comparisons with earlier results in normed spaces.

The projective norm of $\omega\in X\otimes Y$ is
\begin{equation*}
   \Norm{\omega}{X\otimes_\pi Y}:=
   \inf\Big\{\sum_{i=1}^k\Norm{u_i}{X}\Norm{v_i}{Y}:\omega=\sum_{i=1}^k u_i\otimes v_i, u_i\in X,v_i\in Y, k\in\N\Big\},
\end{equation*}
where the infimum is taken over all possible representations of $\omega$, of any length. For any such representation, we have
\begin{equation*}
  \Babs{\sum_{i=1}^k \pair{u_i}{x^*}\pair{v_i}{y^*}}
  \leq\sum_{i=1}^k\Norm{u_i}{X}\Norm{v_i}{Y}\Norm{x^*}{X^*}\Norm{y^*}{Y^*}.
\end{equation*}
Taking an infimum over the different representations of $\omega$ and supremum over $x^*\in\bar B_{X^*},y^*\in\bar B_{Y^*}$ proves the well-known estimate
\begin{equation}\label{eq:injVsProj}
  \Norm{\omega}{X\otimes_\eps Y}\leq\Norm{\omega}{X\otimes_\pi Y}.
\end{equation}

The definition of the projective norm would seem to make sense for any quasi-normed spaces, but might still be trivial.

\begin{example}
Let $(S,\mathcal M,\mu)$ be a non-atomic measure space and $E\in\mathcal M$ have $\mu(E)\in(0,\infty)$. We write it as a disjoint union $E=\bigcup_{i=1}^k E_i$, where each $E_i$ has $\mu(E_i)=k^{-1}\mu(E)$. Then $1_E\otimes y=\sum_{i=1}^k 1_{E_i}\otimes y$, and hence
\begin{equation*}
  \Norm{1_E\otimes y}{L^p(\mu)\otimes_\pi Y}
  \leq\sum_{i=1}^k\Norm{1_{E_i}}{L^p(\mu)}\Norm{y}{Y}
  =\sum_{i=1}^k\mu(E_i)^{1/p}\Norm{y}{Y}
  =k^{1-1/p}\mu(E)^{1/p}\Norm{y}{Y}.
\end{equation*}
If $p\in(0,1)$, then $k^{1-1/p}\to 0$ with $k\to\infty$. Hence $\Norm{f\otimes y}{L^p\otimes_\pi Y}=0$ for every indicator $f=1_E$, and hence for every simple function $f\in L^p(\mu)$.
\end{example}

However, we will make some use of the following variant:
\begin{equation}\label{eq:nTensorNorm}
   \Norm{\omega}{X\otimes_\pi^n Y}:=
   \inf\Big\{\sum_{i=1}^n\Norm{u_i}{X}\Norm{v_i}{Y}:\omega=\sum_{i=1}^n u_i\otimes v_i, u_i\in X,v_i\in Y\Big\},
\end{equation}
where we only allow expansions of fixed length $n$ (or equivalently, of length at most $n$, since we can always add some $0\otimes 0$'s in the end). One should note that this is not even a quasi-norm; two tensors $\omega_i\in X\otimes Y$ with expansions of length at most $n$ and hence $\Norm{\omega_i}{X\otimes_\pi^n Y}<\infty$ may have a sum that cannot be written in such a form, and thus $\Norm{\omega_1+\omega_2}{X\otimes_\pi^n Y}=\infty$. Nevertheless, $X\otimes_\pi^n Y:=\{\omega\in X\otimes Y:\Norm{\omega}{X\otimes_\pi^n Y}<\infty\}$ is the image of $X^n\times Y^n$ under the bilinear map $(\vec f,\vec g)\mapsto\vec f\odot\vec g$, which makes it relevant for our considerations.

\section{Convex bodies and reducing operators}\label{sec:convex}

For $\vec x=\sum_{i=1}^n x_i\otimes\vec v_i\in X\otimes\R^n$ and $x^*\in X^*$ and $\vec v\in\R^n$, we define
\begin{equation*}
  \pair{\vec x}{x^*}=\sum_{i=1}^n \pair{x_i}{x^*}\vec v_i\in\R^n,\quad
  \vec x\cdot\vec v=\sum_{i=1}^n (\vec v_i\cdot \vec v)x_i\in X.
\end{equation*}

If $X$ is a Banach space with normed dual $X^*$, then
\begin{equation*}
  \Cave{\vec x}_X:=\{\pair{\vec x}{x^*}:x^*\in\bar B_{X^*}\}\subseteq\R^n
\end{equation*}
is a symmetric convex body. By the John ellipsoid theorem, there is a possibly degenerate ellipsoid $E_{\vec x}=A_{\vec x}\bar B_{\R^n}$, where $A_{\vec x}$ is non-negative self-adjoint operator on $\R^n$, such that
\begin{equation*}
  E_{\vec x}\subseteq\Cave{\vec x}_X\subseteq \sqrt{n}E_{\vec x}.
\end{equation*}
In principle, these considerations remain valid even if $X$ is just a quasi-Banach space, but the conclusions may be completely uninteresting, since it might happen that $X^*$ is too small or even $X^*=\{0\}$, as in the prominent case that $X=L^p(\mu)$ for $p\in(0,1)$ and a non-atomic measure space $(\Omega,\mathscr A,\mu)$. In this case, it follows that $\Cave{\vec x}_X=E_{\vec x}=\{0\}$, and $A_{\vec x}=0$. If we assume that $X$ is a quasi-Banach space {\em with a norming dual}, i.e, the property that
\begin{equation*}
  \Norm{x}{X}=\sup\{\pair{x}{x^*}:x^*\in\bar B_{X^*}\},
\end{equation*}
then the situation is essentially as good as in a Banach space.

On the other hand, even if $X$ is just a quasi-Banach space, we can define a semi-quasi-norm on $\R^n$ by
\begin{equation*}
  \Norm{\vec e}{X(\vec x)}:=\Norm{\vec{x}\cdot\vec e}{X}.
\end{equation*}
Recall that a quasi-norm means that we allow a constant in the triangle inequality; the prefix semi- means that we allow a nontrivial null space. We will shortly also need the notion of $p$-norm, which means that we have the $p$-triangle inequality $\Norm{x+y}{}^p\leq\Norm{x}{}^p+\Norm{y}{}^p$.

\begin{proposition}\label{prop:John}
Let $\Norm{\ }{}$ be a (semi-)quasi-norm on $\R^n$.  Then there is positive \mbox{(semi-)}definite matrix $A$ such that
\begin{equation*}
  \Norm{x}{}\approx \abs{Ax};
\end{equation*}
the implied constants depend only on $n$ and the quasi-triangle constant of $\Norm{\ }{}$.
\end{proposition}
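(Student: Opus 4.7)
The plan is to reduce to the classical John ellipsoid theorem for norms on $\R^n$ through three successive reductions: quotienting by the null space, replacing the quasi-norm by an equivalent $p$-norm via Aoki--Rolewicz, and then replacing the $p$-norm by a genuine norm via the convex hull of its unit ball.

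For the first reduction, let $N := \{x \in \R^n : \Norm{x}{} = 0\}$. Homogeneity together with the quasi-triangle inequality make $N$ a linear subspace, and $\Norm{x+y}{} \approx \Norm{x}{}$ for $y \in N$ with constant depending only on the quasi-triangle constant. I would restrict attention to the Euclidean orthogonal complement $N^{\perp} \cong \R^m$ with $m \leq n$, where $\Norm{\ }{}$ induces a genuine quasi-norm; then, given a positive definite operator $A'$ on $N^{\perp}$ with $\Norm{x}{} \approx \abs{A'x}$ for $x \in N^{\perp}$, I would extend by $A := A' \circ P_{N^{\perp}}$ to $\R^n$, which is automatically self-adjoint and positive semi-definite since $N \perp N^{\perp}$.

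For the second reduction, I would invoke the Aoki--Rolewicz theorem to replace $\Norm{\ }{}$ by an equivalent $p$-norm, with $p \in (0,1]$ depending only on the quasi-triangle constant; assuming this has been done, I may treat $\Norm{\ }{}$ itself as a $p$-norm. Then $B := \{x : \Norm{x}{} \leq 1\}$ is centrally symmetric, bounded and absorbing in the Euclidean topology, and its convex hull $K := \conv(B)$ is the unit ball of a genuine norm $\Norm{\ }{K}$ satisfying $\Norm{x}{K} \leq \Norm{x}{}$ (from $B \subseteq K$). For the reverse comparison I would use Carath\'eodory's theorem to write any $x$ with $\Norm{x}{K} = 1$ as a convex combination $\sum_{i=1}^{n+1}\lambda_i b_i$ of elements of $B$, and exploit $p$-subadditivity to obtain
\begin{equation*}
  \Norm{x}{}^p \leq \sum_{i=1}^{n+1} \lambda_i^p \Norm{b_i}{}^p \leq n+1,
\end{equation*}
so that $\Norm{x}{} \leq (n+1)^{1/p}\Norm{x}{K}$ in general.

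The last step is to apply the classical John ellipsoid theorem to the honest norm $\Norm{\ }{K}$ on $\R^m$, producing a positive definite $A'$ with $\abs{A'x} \leq \Norm{x}{K} \leq \sqrt{m}\abs{A'x}$, and to chain the three comparisons together with the null-space extension of the first step. The resulting $A$ is positive (semi-)definite and satisfies $\Norm{x}{} \approx \abs{Ax}$ with constants depending only on $n$ and the Aoki--Rolewicz exponent $p$, hence only on $n$ and the quasi-triangle constant. I expect no serious obstacle; the main delicacy is quantitative bookkeeping, namely tracking the Aoki--Rolewicz dependence $p = p(C)$ through the Carath\'eodory estimate to keep the final constants controlled only by $n$ and the quasi-triangle constant, and verifying that the kernel extension preserves positive semi-definiteness.
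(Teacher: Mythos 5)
Your proposal is correct and follows essentially the same route as the paper: Aoki--Rolewicz to pass to a $p$-norm, then conversion of the $p$-norm to a genuine norm on $\R^n$, then John's theorem, with the semi-definite case handled at the end (or, in your ordering, at the start) by orthogonally projecting off the null space. The only difference is cosmetic: where you spell out the $p$-norm-to-norm comparison via Carath\'eodory and $p$-subadditivity, the paper simply cites the argument from Frazier--Roudenko \cite{FR04}.
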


\begin{proof}
By the Aoki--Rolewicz theorem, as stated e.g. in \cite[Theorem 1.2]{Malig04}, every quasi-norm $\Norm{\ }{}$ is equivalent to a $p$-norm $\Norm{\ }{}'$, where both $p\in(0,1]$ and the equivalence constants depend only on the quasi-triangle constant of $\|\ \|$. By an argument given in \cite[page 1237]{FR04}, every $p$-norm $\Norm{\ }{}'$ on $\R^n$ is equivalent to a norm of the form $x\mapsto\abs{Ax}$, where the equivalence constants depend only on $p$ and $n$. A combination of these equivalences proves the proposition in the case of a quasi-norm.

If $\Norm{\ }{}$ is only a semi-quasi-norm, then $W=\{x\in\R^n:\Norm{x}{}=0\}$ is a subspace, and $\Norm{\ }{}$ is a quasi-norm on its orthogonal complement $V=W^{\perp}$. The previous argument provides us with a positive definite linear mapping $A'$ on $V$ such that $\Norm{x}{}\approx\abs{A'x}$ for $x\in V$. If $P$ is the orthogonal projection of $\R^n$ onto $V$, then $(I-P)x\in W$, and the quasi-triangle inequality shows that $\Norm{x}{}\approx\Norm{Px}{}$ for all $x\in\R^n$. Thus $A=A'\circ P$ provides the desired semi-definite $A$.
\end{proof}

By Proposition \ref{prop:John}, we have
\begin{equation*}
  \Norm{\vec e}{X(\vec x)}\approx\abs{[\vec x]_{X}\vec e},
\end{equation*}
for some positive semi-definite operator $[\vec x]_X$ on $\R^n$. (The notation is slightly dangerous, since this operator is not uniquely determined; however, this ambiguity will not cause any real issues.) When $X=L^p(Q;\R^n)$ and $X\otimes\R^n$ is identified with $L^p(Q;\R^{n\times n})$ so that $\vec w\cdot\vec e=W\vec e$ for a matrix-valued function $W$, this idea is commonly used in the theory of matrix-weights, and a non-negative matrix $[W]_{L^p(Q;\R^n)}$ satisfying
\begin{equation*}
  \abs{[W]_{L^p(Q;\R^n)}\vec e}\approx\Norm{W\vec e}{L^p(Q;\R^n)}
\end{equation*}
is called a {\em reducing matrix} (or operator) associated with the matrix weight $W$. What we have defined above is a natural extension of this idea to general quasi-Banach spaces, and we keep referring to $[\vec x]_X$ as the reducing matrix.

In the case of a Banach space, we may invoke duality to see that
\begin{equation*}
\begin{split}
   \Norm{\vec e}{X(\vec x)} &=\sup\{\pair{\vec{x}\cdot\vec e}{x^*}:x^*\in\bar B_{X^*}\} 
   =\sup\{\pair{\vec{x}}{x^*}\cdot\vec e:x^*\in\bar B_{X^*}\} \\
   &=\sup\{\vec v\cdot\vec e:\vec v\in\Cave{\vec x}_{X}\} 
   \approx\sup\{\vec v\cdot\vec e:\vec v\in E_{\vec x}\} \\
   &=\sup\{A_{\vec x}\vec u\cdot\vec e:\vec u\in \bar B_{\R^n}\} 
   =\sup\{\vec u\cdot A_{\vec x}\vec e:\vec u\in \bar B_{\R^n}\} 
   =\abs{A_{\vec x}\vec e}.
\end{split}
\end{equation*}
Thus $\abs{A_{\vec x}\vec e}\approx\abs{[\vec x]_X\vec e}$ for all $\vec e\in\R^n$, so also $A_{\vec x}$ defines, up to constants, the same norm as $[\vec x]_X$. This is natural, since both these matrices essentially arise from the John ellipsoid theorem, by considerations essentially dual to each other. When $X$ is a normed space, we are free to switch from one framework to the other through duality. Once we move to quasi-normed spaces, however, only the reducing matrix $[\vec x]_X$ remains well defined, and we choose it as the replacement of the convex body $\Cave{\vec x}_X$ and its John ellipsoid $E_{\vec x}=A_{\vec x}\bar B_{\R^n}$.

The following result shows that the Minkowski dot product of two convex bodies, $\Cave{\vec x}_X\cdot\Cave{\vec x}_Y$ (strictly speaking an interval $[-c,c]$, but identified with its right end point $c$), the quantity featuring in the recent convex body domination literature, admits several other equivalent formulations in the setting of normed spaces. One of these equivalent quantities is the norm of the product of the reducing matrices, $\abs{[\vec x]_X[\vec y]_Y}$, which motivates taking this as the replacement of $\Cave{\vec x}_X\cdot\Cave{\vec x}_Y$ when quasi-norms are involved.

\begin{proposition}\label{prop:equiNormsBanach}
Let $X$ and $Y$ be normed spaces, $\vec x\in X\otimes\R^n$, and $\vec y\in Y\otimes\R^n$. Then
\begin{equation*}
  \abs{[\vec x]_X[\vec y]_Y}
  \lesssim\Cave{\vec x}_X\cdot\Cave{\vec y}_Y
  =\Norm{\vec x\odot\vec y}{X\otimes_\eps Y}
  \leq\Norm{\vec x\odot\vec y}{X\otimes_\pi Y}
  \lesssim\abs{[\vec x]_X[\vec y]_Y},
\end{equation*}
where the implied constants depend only on $n$.
\end{proposition}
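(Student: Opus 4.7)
The plan is to prove each of the four links of the chain separately. The central equality $\Cave{\vec x}_X\cdot\Cave{\vec y}_Y=\Norm{\vec x\odot\vec y}{X\otimes_\eps Y}$ is immediate from unwinding definitions: both sides equal $\sup\{\sum_i\pair{x_i}{x^*}\pair{y_i}{y^*}:x^*\in\bar B_{X^*},y^*\in\bar B_{Y^*}\}$. The subsequent inequality $\Norm{\cdot}{X\otimes_\eps Y}\leq\Norm{\cdot}{X\otimes_\pi Y}$ is just \eqref{eq:injVsProj}. For the first link, I invoke the lower John inclusion $E_{\vec x}=A_{\vec x}\bar B_{\R^n}\subseteq\Cave{\vec x}_X$ (and its $Y$-analogue), which reduces matters to computing the Minkowski dot product of two ellipsoids: by symmetry of $A_{\vec x}$,
\[
E_{\vec x}\cdot E_{\vec y}=\sup_{\vec u,\vec v\in\bar B_{\R^n}}A_{\vec x}\vec u\cdot A_{\vec y}\vec v=\sup_{\vec u,\vec v\in\bar B_{\R^n}}\vec u\cdot A_{\vec x}A_{\vec y}\vec v=\abs{A_{\vec x}A_{\vec y}}
\]
(operator norm). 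The quadratic-form equivalence $A_{\vec x}^2\approx[\vec x]_X^2$ from the preamble then propagates to $\abs{A_{\vec x}A_{\vec y}}\approx\abs{[\vec x]_X[\vec y]_Y}$, via the identity $\abs{MN}^2=\sup_{\vec v}\vec v^TNM^2N\vec v$ combined with $\abs{MN}=\abs{NM}$ for symmetric $M,N$.

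The heart of the proposition is the upper bound $\Norm{\vec x\odot\vec y}{X\otimes_\pi Y}\lesssim\abs{[\vec x]_X[\vec y]_Y}$; the naive length-$n$ expansion $\vec x\odot\vec y=\sum_i x_i\otimes y_i$ only produces the coarser $\sum_i\abs{[\vec x]_X\vec e_i}\abs{[\vec y]_Y\vec e_i}$. My idea is a change of coordinates in $\R^n$ that absorbs one reducing matrix into the other. Assuming $[\vec x]_X$ is invertible, set $T:=[\vec x]_X^{-1}$ (symmetric) and define $\vec x':=T\vec x\in X^n$, $\vec y':=T^{-T}\vec y=[\vec x]_X\vec y\in Y^n$. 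Since the total transformation $T\otimes T^{-T}$ acts as the identity on tensors, one has $\vec x'\odot\vec y'=\sum_{i=1}^n x'_i\otimes y'_i=\vec x\odot\vec y$. The componentwise identities $\vec x'\cdot\vec e=\vec x\cdot T\vec e$ and $\vec y'\cdot\vec e=\vec y\cdot[\vec x]_X\vec e$ yield
\[
\Norm{x'_i}{X}\approx\abs{[\vec x]_X T\vec e_i}=1,\qquad\Norm{y'_i}{Y}\approx\abs{[\vec y]_Y[\vec x]_X\vec e_i}\leq\abs{[\vec x]_X[\vec y]_Y},
\]
using $\abs{BA}=\abs{AB}$ for symmetric $A,B$; summing the length-$n$ representation then gives the desired
\[
\Norm{\vec x\odot\vec y}{X\otimes_\pi Y}\leq\sum_{i=1}^n\Norm{x'_i}{X}\Norm{y'_i}{Y}\lesssim n\abs{[\vec x]_X[\vec y]_Y}.
\]

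The only real obstacle is the degenerate case when $[\vec x]_X$ has a nontrivial kernel $W$: there $\vec x\cdot\vec e=0$ on $W$, so $\vec x$ effectively lives in $X\otimes V$ with $V=W^\perp$, and the preceding change of coordinates is carried out inside $V$ with reducing matrices of reduced rank; alternatively, one perturbs $[\vec x]_X$ by $\eps I$ and lets $\eps\to 0$. Either way, the implicit constants depend only on $n$.
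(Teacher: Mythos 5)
Your proof is correct, and its crucial final estimate $\Norm{\vec x\odot\vec y}{X\otimes_\pi Y}\lesssim\abs{[\vec x]_X[\vec y]_Y}$ is argued exactly as in the paper: both expand $\vec x\odot\vec y=[\vec x]_X^{-1}\vec x\odot[\vec x]_X\vec y$ into a length-$n$ representation and bound the two factors by $1$ and $\abs{[\vec x]_X[\vec y]_Y}$, and the first of your two workarounds for degenerate $[\vec x]_X$ (carry out the change of coordinates on $\range([\vec x]_X)$, i.e.\ read $[\vec x]_X^{-1}$ as $[\vec x]_X^{-1}P_{\range([\vec x]_X)}$) is precisely what the paper writes out. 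The only place you take a genuinely different route is the first inequality $\abs{[\vec x]_X[\vec y]_Y}\lesssim\Cave{\vec x}_X\cdot\Cave{\vec y}_Y$: you pass through the John ellipsoid, establishing $E_{\vec x}\cdot E_{\vec y}=\abs{A_{\vec x}A_{\vec y}}$ and then transferring $\abs{A_{\vec x}A_{\vec y}}\approx\abs{[\vec x]_X[\vec y]_Y}$ via the quadratic-form equivalence together with $\abs{MN}=\abs{NM}$ for symmetric matrices. The paper instead proves the stronger two-sided statement $\abs{[\vec x]_X[\vec y]_Y}\approx\Cave{\vec x}_X\cdot\Cave{\vec y}_Y$ by a direct chain of equalities: it expands the operator norm of $[\vec x]_X[\vec y]_Y$ as a supremum, repeatedly substitutes the defining equivalence $\abs{[\vec x]_X\vec e}\approx\Norm{\vec x\cdot\vec e}{X}$ (and its $Y$-analogue), and uses dual characterisations of the norms, never touching $A_{\vec x}$. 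Both are fine and give constants depending only on $n$; the paper's route is a bit more self-contained since it needs only the reducing matrix and not the John ellipsoid, while yours is arguably more geometric.
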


\begin{proof}
\begin{equation*}
\begin{split}
  \abs{[\vec x]_X[\vec y]_Y}
  &=\sup\{\abs{[\vec x]_X[\vec y]_Y\vec e}:\vec e\in\bar B_{\R^n}\} \\
  &\approx\sup\{\Norm{\vec x\cdot[\vec y]_Y\vec e}{X}:\vec e\in\bar B_{\R^n}\} \\
  &=\sup\{\pair{\vec x\cdot[\vec y]_Y\vec e}{x^*}:\vec e\in\bar B_{\R^n},x^*\in\bar B_{X^*}\} \\
  &=\sup\{[\vec y]_Y\pair{\vec x}{x^*}\cdot\vec e:\vec e\in\bar B_{\R^n},x^*\in\bar B_{X^*}\} \\
  &=\sup\{\abs{[\vec y]_Y\pair{\vec x}{x^*}}:x^*\in\bar B_{X^*}\} \\
  &\approx\sup\{\Norm{\vec y\cdot\pair{\vec x}{x^*}}{Y}:x^*\in\bar B_{X^*}\} \\
  &=\sup\{\Bpair{\vec y\cdot\pair{\vec x}{x^*}}{y^*}:x^*\in\bar B_{X^*},y^*\in\bar B_{Y^*}\} \\
  &=\sup\{\pair{\vec x}{x^*}\cdot\pair{\vec y}{y^*}:x^*\in\bar B_{X^*},y^*\in\bar B_{Y^*}\} 
  =\Cave{\vec x}_X\cdot\Cave{\vec y}_Y \\
  &=\sup\{\pair{\vec x\odot\vec y}{x^*\otimes y^*}:x^*\in \bar B_{X^*},y^*\in\bar B_{Y^*}\}
  =\Norm{\vec x\odot\vec y}{X\otimes_\eps Y}
\end{split}
\end{equation*}
by definition of the convex bodies and the injective tensor norm on the last two lines.
The estimate $ \Norm{\omega}{X\otimes_\eps Y}\leq\Norm{\omega}{X\otimes_\pi Y}$ is a basic fact about tensor products recalled in \eqref{eq:injVsProj}.

To prove the remaining estimate, from the defining identity
\begin{equation*}
  \Norm{\vec x\cdot\vec e}{X}\approx\abs{[\vec x]_X\vec e}
\end{equation*}
we see that $\vec x\cdot\vec e=0$ for all $\vec e\in\kernel([\vec x]_X)$. If $P_{\range([\vec x]_X)}$ is the orthogonal projection onto $\range([\vec x]_X)$, we have $(I-P_{\range([\vec x]_X)})\vec e\in\kernel([\vec x]_X)$ for all $\vec e\in\R^n$, hence
\begin{equation*}
  (I-P_{\range([\vec x]_X)})\vec x\cdot\vec e
  =\vec x\cdot (I-P_{\range([\vec x]_X)})\vec e=0
\end{equation*}
for all $\vec e\in\R^n$, and thus $\vec x=P_{\range([\vec x]_X)}\vec x$. It follows that
\begin{equation*}
  [\vec x]_X^{-1}\vec x=[\vec x]_X^{-1}P_{\range([\vec x]_X)}\vec x
\end{equation*}
is well defined, and we can write
\begin{equation*}
  \vec x\odot\vec y
  =[\vec x]_X^{-1}\vec x\odot [\vec x]_X\vec y
  =\sum_{i=1}^n(\vec e_i\cdot [\vec x]_X^{-1}\vec x)\otimes (\vec e_i\cdot[\vec x]_X\vec y).
\end{equation*}
Hence
\begin{equation*}
  \Norm{\vec x\odot\vec y}{X\otimes_{\pi}Y}
  \leq\sum_{i=1}^n \Norm{\vec e_i\cdot [\vec x]_X^{-1}\vec x}{X}\Norm{\vec e_i\cdot[\vec x]_X\vec y}{Y}.
\end{equation*}
Here
\begin{equation*}
\begin{split}
  \Norm{\vec e_i\cdot [\vec x]_X^{-1}\vec x}{X}
  &=\Norm{[\vec x]_X^{-1}P_{\range([\vec X]_X)}\vec e_i\cdot \vec x}{X} \\
  &\approx\abs{[\vec x]_X[\vec x]_X^{-1}P_{\range([\vec X]_X)}\vec e_i}
  =\abs{P_{\range([\vec X]_X)}\vec e_i}\leq 1,
\end{split}
\end{equation*}
and
\begin{equation*}
  \Norm{\vec e_i\cdot[\vec x]_X\vec y}{Y}
  =\Norm{[\vec x]_X\vec e_i\cdot\vec y}{Y}
  \approx\abs{[\vec y]_Y[\vec x]_X\vec e_i}
  \leq\abs{[\vec y]_Y[\vec x]_X}=\abs{[\vec x]_X[\vec y]_Y}.
\end{equation*}
Thus
\begin{equation*}
   \Norm{\vec x\odot\vec y}{X\otimes_{\pi}Y}\lesssim\abs{[\vec x]_X[\vec y]_Y}.
\end{equation*}
\end{proof}

While definable in a rather general setting, an inconvenience of the quantity $\abs{[\vec x]_X[\vec y]_Y}$ is that, unlike the other quantities in Proposition \ref{prop:equiNormsBanach}, it is not in any obvious way a function of the tensor $\vec x\odot\vec y$, but would seem to depend on the particular $\vec x$ and $\vec y$ in its representation. The next result partially addresses this concern, but only at the cost of using the special ``norm'' \eqref{eq:nTensorNorm} which, as we observed, is not even a quasi-norm.

\begin{proposition}\label{prop:equiNormsQBanach}
Let $X$ and $Y$ be quasi-normed spaces, $\vec x\in X\otimes\R^n$, and $\vec y\in Y\otimes\R^n$. Then
\begin{equation*}
  \abs{[\vec x]_X[\vec y]_Y}
  \approx\Norm{\vec x\odot\vec y}{X\otimes_\pi^n Y}
\end{equation*}
where the implied constants depend only on $n$ and the quasi-triangle constants of $X$ and $Y$.
\end{proposition}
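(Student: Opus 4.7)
The proof splits into the two stated directions, which behave quite asymmetrically.

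For the upper bound $\Norm{\vec x\odot\vec y}{X\otimes_\pi^n Y}\lesssim\abs{[\vec x]_X[\vec y]_Y}$, I plan to transplant the last portion of the proof of Proposition \ref{prop:equiNormsBanach} essentially verbatim. That argument uses only the defining relation $\Norm{\vec x\cdot\vec e}{X}\approx\abs{[\vec x]_X\vec e}$, the consequent fact that $\vec x\cdot\vec e=0$ for $\vec e\in\kernel([\vec x]_X)$, and a Moore--Penrose-type manipulation to produce the explicit $n$-term expansion
\[
  \vec x\odot\vec y=\sum_{i=1}^n\bigl(\vec e_i\cdot[\vec x]_X^{-1}\vec x\bigr)\otimes\bigl(\vec e_i\cdot[\vec x]_X\vec y\bigr).
\]
Being of length exactly $n$, this representation is admissible for $\Norm{\cdot}{X\otimes_\pi^n Y}$, and the factor-by-factor estimates already in the Banach proof give the bound. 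No triangle inequality in $X$ or $Y$ is invoked at this step, so the argument carries over to the quasi-normed setting without change.

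For the reverse inequality $\abs{[\vec x]_X[\vec y]_Y}\lesssim\Norm{\vec x\odot\vec y}{X\otimes_\pi^n Y}$, the Banach proof routed through duality and the convex bodies $\Cave{\vec x}_X\cdot\Cave{\vec y}_Y$, both of which can collapse to $\{0\}$ when $X^*$ or $Y^*$ is trivial. My plan is a finite-dimensional reduction to the Banach case. For any admissible representation $\vec x\odot\vec y=\sum_{i=1}^n u_i\otimes v_i$, I set
\[
  V_X:=\lspan(x_1,\ldots,x_n,u_1,\ldots,u_n)\subseteq X,\quad V_Y:=\lspan(y_1,\ldots,y_n,v_1,\ldots,v_n)\subseteq Y,
\]
both of dimension at most $2n$. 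Combining the Aoki--Rolewicz theorem with the finite-dimensional step from the proof of Proposition \ref{prop:John} (every $p$-norm on a finite-dimensional space is equivalent to a genuine norm), the restrictions of the quasi-norms of $X,Y$ to $V_X,V_Y$ are equivalent to Banach norms, with constants depending only on $n$ and the quasi-triangle constants of $X,Y$. Applying Proposition \ref{prop:equiNormsBanach} inside these auxiliary Banach spaces then gives
\[
  \abs{[\vec x]_{V_X}[\vec y]_{V_Y}}\lesssim\Norm{\vec x\odot\vec y}{V_X\otimes_\pi V_Y}\leq\sum_{i=1}^n\Norm{u_i}{V_X}\Norm{v_i}{V_Y}\approx\sum_{i=1}^n\Norm{u_i}{X}\Norm{v_i}{Y},
\]
and passing to the infimum over representations finishes the bound, provided we can identify $\abs{[\vec x]_{V_X}[\vec y]_{V_Y}}$ with $\abs{[\vec x]_X[\vec y]_Y}$ up to constants.

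The main subtle point is precisely this last identification, i.e.\ showing that the ambiguity built into the notation $[\vec x]_X$ does not spoil the operator norm of the product. For two PSD matrices $A,B\geq 0$ with $\abs{A\vec e}\approx\abs{B\vec e}$ one has $\pair{A^2\vec e}{\vec e}\approx\pair{B^2\vec e}{\vec e}$, hence $A^2\approx B^2$ in the L\"owner order, and by operator monotonicity of the square root $A\approx B$. Applied to $[\vec x]_X$ versus $[\vec x]_{V_X}$ (and similarly for $Y$), combined with the identity $\abs{AB}^2=\abs{BA^2B}$ and the monotonicity of $M\mapsto BMB$, this yields $\abs{[\vec x]_X[\vec y]_Y}\approx\abs{[\vec x]_{V_X}[\vec y]_{V_Y}}$ with the desired constants. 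Once this observation is in hand, the proof essentially writes itself.
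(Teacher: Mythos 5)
Your proof is correct, and for the easy direction $\Norm{\vec x\odot\vec y}{X\otimes_\pi^n Y}\lesssim\abs{[\vec x]_X[\vec y]_Y}$ you take the same route as the paper, transplanting the Moore--Penrose-type expansion from the Banach proof. For the hard direction your route is genuinely different. The paper stays entirely in the quasi-normed world and proves a representation-invariance of $\abs{[\vec x]_X[\vec y]_Y}$ directly by linear algebra: given two representations $\vec x\odot\vec y=\vec z\odot\vec w$, it writes $\vec x=A\vec u$, $\vec y=B\vec v$, $\vec z=C\vec u$, $\vec w=D\vec v$ with respect to bases $\vec u,\vec v$ of the relevant finite-dimensional spans, deduces $B^*A=D^*C$ from the tensor identity and linear independence of the elementary tensors $u_i\otimes v_j$, computes $\abs{[\vec x]_X[\vec y]_Y}\approx\abs{[\vec v]_Y B^*A[\vec u]_X}=\abs{[\vec v]_Y D^*C[\vec u]_X}\approx\abs{[\vec z]_X[\vec w]_Y}$, and finishes with the crude quasi-triangle bound $\abs{[\vec z]_X[\vec w]_Y}\lesssim\sum_i\Norm{z_i}{X}\Norm{w_i}{Y}$ followed by an infimum. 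You instead reduce to the Banach case by restricting to subspaces of dimension at most $2n$, invoking Aoki--Rolewicz and finite-dimensional normability (the ingredients of Proposition \ref{prop:John}), applying Proposition \ref{prop:equiNormsBanach} in equivalent Banach norms, and patching up the resulting ambiguity of the reducing matrices via the L\"owner order. (Your detour through operator monotonicity of the square root is dispensable: the L\"owner bound $A_1^2\lesssim A_2^2$ already combines with $\abs{AB}^2=\abs{BA^2B}$ for PSD $A$ and the monotonicity of $M\mapsto BMB$ to give $\abs{A_1B}\lesssim\abs{A_2B}$.) The paper's route is more elementary and self-contained, needing only linear algebra and no renormalization of the spaces; yours is a structurally illuminating shortcut showing that the quasi-normed statement is essentially a corollary of the Banach one, precisely because only finitely many vectors are ever in play.
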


\begin{proof}
The proof that $\Norm{\vec x\odot\vec y}{X\otimes_\pi^n Y}\lesssim\abs{[\vec x]_X[\vec y]_Y}$ is the same as the corresponding estimate for Banach spaces, just observing that the proof there already expressed $\vec x\odot\vec y$ as a sum of $n$ elementary tensors $(\vec e_i\cdot [\vec x]_X^{-1}\vec x)\otimes (\vec e_i\cdot[\vec x]_X\vec y)$ only. (The dependence on the quasi-triangle constants comes from the fact that such dependence may be present in the estimates of the form $\Norm{\vec x\cdot\vec u}{X}\approx\abs{[\vec x]_X\vec u}$.) As for the other direction, we first observe that, for any orthonormal basis $(\vec e_i)_{i=1}^n$ of $\R^n$, we have
\begin{equation*}
\begin{split}
  \abs{[\vec x]_X[\vec y]_Y}
  &=\sup\{\abs{[\vec x]_X[\vec y]_Y\vec e}:\vec e\in\bar B_{\R^n}\} \\
  &\approx\sup\{\Norm{\vec x\cdot[\vec y]_Y\vec e}{X}:\vec e\in\bar B_{\R^n}\} \\
  &\lesssim\sum_{i=1}^n\sup\{\Norm{(\vec x\cdot\vec e_i)(\vec e_i\cdot [\vec y]_Y\vec e)}{X}:\vec e\in\bar B_{\R^n}\} \\
  &\leq\sum_{i=1}^n\Norm{\vec x\cdot\vec e_i}{X}\abs{[\vec y]_Y\vec e_i} \\
  &\approx\sum_{i=1}^n\Norm{\vec x\cdot\vec e_i}{X}\Norm{\vec y\cdot\vec e_i}{Y}.
\end{split}
\end{equation*}
Here, on the right, we have $\sum_{i=1}^n\Norm{x_i}{X}\Norm{y_i}{Y}$ related to a particular expansion $\sum_{i=1}^n x_i\otimes y_i=\vec x\odot\vec y$.
However, for $\Norm{\vec x\odot\vec y}{X\otimes_\pi^n Y}$, we need to consider all possible expansions. To this end, suppose that $\vec x\odot\vec y=\vec z\odot\vec w$. Let $u_1,\ldots,u_k\in X$ be linearly independent elements with $$\lspan\{u_1,\ldots,u_k\}=\lspan\{x_1,\ldots,x_n,z_1,\ldots,z_n\},$$ and
$v_1,\ldots,v_\ell\in Y$ be linearly independent elements with $$\lspan\{v_1,\ldots,v_\ell\}=\lspan\{y_1,\ldots,y_n,w_1,\ldots,w_n\};$$ thus $k,\ell\leq 2n$.

Then
\begin{equation*}
  \vec x=A\vec u,\quad\vec y=B\vec v,\quad\vec z=C\vec u,\quad\vec w=D\vec v
\end{equation*}
for some $n\times k$ matrices $A,C$ and $n\times\ell$ matrices $B,D$.
Since $u_i$ are linearly independent in $X$ and $v_j$ in $Y$, the elementary tensors $u_i\otimes v_j$ are linearly independent in $X\otimes Y$ \cite[Proposition 1.1(a)]{Ryan:book}.
Then the condition
\begin{equation*}
\begin{split}
  &\vec x\odot\vec y=(A\vec u)\odot(B\vec v)=(B^*A)\vec u\odot\vec v \\
= &\vec z\odot\vec w=(C\vec u)\odot(D\vec v)=(D^*C)\vec u\odot\vec v,
\end{split}
\end{equation*}
implies, after elementary coefficient-chasing, that $B^*A=D^*C$.

Moreover, we observe that
\begin{equation*}
  \abs{[A\vec u]_X\vec e}\approx\Norm{A\vec u\cdot\vec e}{X}
  =\Norm{\vec u\cdot A^*\vec e}{X}
  \approx\abs{[\vec u]A^*\vec e}.
\end{equation*}
Hence
\begin{equation*}
\begin{split}
  \abs{[A\vec u]_X[B\vec v]_Y}
  &=\sup\{\abs{[A\vec u]_X[B\vec v]_Y\vec e}:\vec e\in\bar B_{\R^n}\} \\
  &\approx\sup\{\abs{[\vec u]_XA^*[B\vec v]_Y\vec e}:\vec e\in\bar B_{\R^n}\} \\
  &=\abs{[\vec u]_XA^*[B\vec v]_Y} \\
  &=\abs{([\vec u]_XA^*[B\vec v]_Y)^*} \\
  &=\abs{[B\vec v]_Y A[\vec u]_X} \\
  &=\sup\{\abs{[B\vec v]_Y A[\vec u]_X\vec e}:\vec e\in\bar B_{\R^n}\} \\
  &\approx\sup\{\abs{[\vec v]_Y B^*A[\vec u]_X\vec e}:\vec e\in\bar B_{\R^n}\} \\
  &=\abs{[\vec v]_Y B^*A[\vec u]_X}.
\end{split}
\end{equation*}
It follows that
\begin{equation*}
\begin{split}
  \abs{[\vec x]_X[\vec y]_Y}
  &=\abs{[A\vec u]_X[B\vec v]_Y} \\
  &\approx\abs{[\vec v]_Y B^*A[\vec u]_X} \\
  &=\abs{[\vec v]_Y D^*C[\vec u]_X} \\
  &\approx\abs{[C\vec u]_X [D\vec v]_Y} \\
  &=\abs{[\vec z]_X[\vec w]_Y} \\
  &\lesssim\sum_{i=1}^n\Norm{z_i}{X}\Norm{w_i}{Y}
\end{split}
\end{equation*}
whenever
\begin{equation*}
  \vec x\odot\vec y=\vec z\odot\vec w=\sum_{i=1}^n(\vec z\cdot\vec e_i)\otimes(\vec e_i\cdot\vec w_i)
  =\sum_{i=1}^n z_i\otimes w_i.
\end{equation*}
Taking the infimum over all such representations, we deduce that
\begin{equation*}
  \abs{[\vec x]_X[\vec y]_Y}\lesssim\Norm{\vec x\odot\vec y}{X\otimes_\pi^n Y}.
\end{equation*}
\end{proof}

We will finally discuss a more concrete situation, where $X$ and $Y$ have appropriate function space structure. As it turns out, this allows the identification of  $\abs{[\vec x]_X[\vec y]_Y}$ with more natural tensor (quasi-)norms.

\begin{definition}\label{def:lattice}
We say that $X$ is a {\em quasi-normed function lattice} if $X\subseteq L^0(S,\mathcal M)$ is a vector space of measurable functions equipped with a quasi-norm $\Norm{\ }{X}$ such that: whenever $f\in X$ and $h\in L^0(S,\mathcal M)$ satisfies $\abs{h}\leq\abs{f}$ pointwise, then $h\in X$ and $\Norm{h}{X}\leq\Norm{f}{X}$.

If $X\subseteq L^0(S,\mathcal M)$ is a quasi-normed function lattice and $Y$ is a quasi-normed space, we define on $X\otimes Y$ the norm
\begin{equation*}
  \BNorm{\sum_{i=1}^n x_i\otimes y_i}{X(Y)}:=\BNorm{s\mapsto \Norm{\sum_{i=1}^n x_i(s)y_i}{Y}}{X}.
\end{equation*}

We say that a quasi-normed space $X$ is {\em related to a quasi-normed function lattice} $E\subseteq L^0(S,\mathcal M)$ if there is a linear operator $J:X\to E$ such
\begin{equation*}
  \Norm{x}{X}=\Norm{Jx}{E}\quad\forall x\in X.
\end{equation*}
We extend the definition of $X(Y)$ to this situation by setting
\begin{equation*}
  \BNorm{\sum_{i=1}^n x_i\otimes y_i}{X(Y)}:=
  \BNorm{\sum_{i=1}^n Jx_i\otimes y_i}{E(Y)}=
  \BNorm{s\mapsto \Norm{\sum_{i=1}^n Jx_i(s)y_i}{Y}}{E}.
\end{equation*}
\end{definition}

In these situations, we have the following:

\begin{proposition}\label{prop:equiNormsQBanachFn}
Let $X$ and $Y$ be quasi-normed spaces, and suppose in addition that $X$ (resp. $Y$) is a quasi-normed function lattice.
Then for all  $\vec x\in X\otimes\R^n$, and $\vec y\in Y\otimes\R^n$, we have
\begin{equation*}
  \abs{[\vec x]_X[\vec y]_Y}
  \approx\begin{cases}\Norm{\vec x\odot\vec y}{X(Y)}, & \text{resp.} \\
  \Norm{\vec x\odot\vec y}{Y(X)},\end{cases}
\end{equation*}
where the implied constants depend only on $n$ and the quasi-triangle constants of $X$ and $Y$. In particular, if both $X$ and $Y$ be quasi-normed function lattices, then
\begin{equation*}
  \abs{[\vec x]_X[\vec y]_Y}
  \approx\Norm{\vec x\odot\vec y}{X(Y)}
  \approx\Norm{\vec x\odot\vec y}{Y(X)}.
\end{equation*}
\end{proposition}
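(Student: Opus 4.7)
The plan is to invoke Proposition \ref{prop:equiNormsQBanach}, which already gives $\abs{[\vec x]_X[\vec y]_Y}\approx\Norm{\vec x\odot\vec y}{X\otimes_\pi^n Y}$, and then compare the $X(Y)$-norm with this modified projective norm in both directions. By the identities $\abs{[\vec x]_X[\vec y]_Y}=\abs{([\vec x]_X[\vec y]_Y)^*}=\abs{[\vec y]_Y[\vec x]_X}$ (which already appeared in the proof of Proposition \ref{prop:equiNormsQBanach}), the two cases ($X$ a lattice vs.\ $Y$ a lattice) are symmetric, and it suffices to establish $\abs{[\vec x]_X[\vec y]_Y}\approx\Norm{\vec x\odot\vec y}{X(Y)}$ when $X$ is (related to) a lattice. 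Using the operator $J:X\to E$ from Definition \ref{def:lattice}, and the fact that $\Norm{\vec x\cdot\vec e}{X}=\Norm{(J\vec x)\cdot\vec e}{E}$ so that $\abs{[\vec x]_X\vec e}\approx\abs{[J\vec x]_E\vec e}$ for all $\vec e$, we may further reduce to the case where $X=E$ is itself a quasi-normed function lattice.

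For the upper bound $\Norm{\vec x\odot\vec y}{X(Y)}\lesssim\abs{[\vec x]_X[\vec y]_Y}$, I would prove the strengthened statement $\Norm{\vec x\odot\vec y}{X(Y)}\lesssim\Norm{\vec x\odot\vec y}{X\otimes_\pi^n Y}$ and then quote Proposition \ref{prop:equiNormsQBanach}. For any representation $\vec x\odot\vec y=\sum_{i=1}^n x_i\otimes y_i$, the quasi-triangle inequality in $Y$ (iterated $n$ times, incurring only an $n$-dependent constant) gives the pointwise bound $\Norm{\sum_i x_i(s)y_i}{Y}\lesssim\sum_i\abs{x_i(s)}\Norm{y_i}{Y}$, and the lattice property of $X$ followed by its quasi-triangle inequality produces $\Norm{\vec x\odot\vec y}{X(Y)}\lesssim\sum_i\Norm{x_i}{X}\Norm{y_i}{Y}$. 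Taking the infimum over length-$n$ representations yields the claim.

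The reverse direction $\abs{[\vec x]_X[\vec y]_Y}\lesssim\Norm{\vec x\odot\vec y}{X(Y)}$ is the main obstacle, and the one where the function-lattice structure is really used. For $\vec e\in\bar B_{\R^n}$, the self-adjointness of the positive semi-definite matrix $[\vec y]_Y$ together with Cauchy--Schwarz on $\R^n$ gives the \emph{pointwise} estimate
\[
  \abs{\vec x(s)\cdot([\vec y]_Y\vec e)}
  =\abs{([\vec y]_Y\vec x(s))\cdot\vec e}
  \leq\abs{[\vec y]_Y\vec x(s)}
  \approx\Norm{\vec x(s)\cdot\vec y}{Y},
\]
where the last step is the defining identity for $[\vec y]_Y$ applied to the vector $\vec x(s)\in\R^n$. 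The lattice property of $X$ then upgrades this to $\Norm{\vec x\cdot([\vec y]_Y\vec e)}{X}\lesssim\Norm{s\mapsto\Norm{\vec x(s)\cdot\vec y}{Y}}{X}=\Norm{\vec x\odot\vec y}{X(Y)}$. Combining with the defining identity $\Norm{\vec x\cdot\vec u}{X}\approx\abs{[\vec x]_X\vec u}$ applied to $\vec u=[\vec y]_Y\vec e$ yields $\abs{[\vec x]_X[\vec y]_Y\vec e}\lesssim\Norm{\vec x\odot\vec y}{X(Y)}$, and a supremum over $\vec e\in\bar B_{\R^n}$ finishes the proof.

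The conceptual core is the use of self-adjointness of $[\vec y]_Y$ to \emph{transfer} it from the $\vec e$-side to the $\vec x(s)$-side via Cauchy--Schwarz, so that the bound matches the very integrand $\Norm{\vec x(s)\cdot\vec y}{Y}$ appearing in the definition of $X(Y)$; once this pointwise inequality is in hand, the lattice axiom does the rest.
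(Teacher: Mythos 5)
Your proof is correct. The direction $\abs{[\vec x]_X[\vec y]_Y}\lesssim\Norm{\vec x\odot\vec y}{X(Y)}$ is essentially identical to the paper's: transfer $[\vec y]_Y$ onto $\vec x(s)$ by self-adjointness, apply Cauchy--Schwarz, use the defining relation $\abs{[\vec y]_Y\vec v}\approx\Norm{\vec y\cdot\vec v}{Y}$ pointwise in $\vec v=\vec x(s)$, and finish with the lattice axiom. For the reverse inequality $\Norm{\vec x\odot\vec y}{X(Y)}\lesssim\abs{[\vec x]_X[\vec y]_Y}$ you take a genuinely different route: you prove the intermediate estimate $\Norm{\omega}{X(Y)}\lesssim\Norm{\omega}{X\otimes_\pi^n Y}$ via the pointwise quasi-triangle inequality in $Y$ plus the lattice axiom in $X$, then invoke Proposition~\ref{prop:equiNormsQBanach}. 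This is valid, since that proposition already established $\Norm{\vec x\odot\vec y}{X\otimes_\pi^n Y}\lesssim\abs{[\vec x]_X[\vec y]_Y}$. The paper instead proves this direction directly, expanding $\abs{[\vec y]_Y\vec x(s)}\leq\sum_i\abs{[\vec y]_Y\vec e_i\cdot\vec x(s)}$ componentwise and estimating each term by $\abs{[\vec x]_X[\vec y]_Y\vec e_i}$, never passing through the modified projective norm. Your approach is more modular and makes the comparison with $X\otimes_\pi^n Y$ explicit; the paper's direct computation is shorter and self-contained, relying on nothing beyond Proposition~\ref{prop:John}. A small remark: the reduction via the isometry $J:X\to E$ that you perform at the outset is not needed for this proposition, which hypothesizes that $X$ (resp.\ $Y$) \emph{is} a function lattice; that reduction is precisely the content of the subsequent Corollary~\ref{cor:equiNormsQBanachFn}, so in effect you have proved both statements at once.
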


\begin{proof}
We consider the case that $X$ is a quasi-normed function lattice. The other case follows by symmetry, since the left-hand side is symmetric in $X$ and $Y$, since the self-adjoint matrices $A=[\vec x]_X, B=[\vec y]_Y$ satisfy
\begin{equation*}
  \abs{AB}=\abs{(AB)^*}=\abs{B^*A^*}=\abs{BA}.
\end{equation*}
We have
\begin{equation*}
\begin{split}
  \abs{[\vec x]_X[\vec y]_Y}
  &=\sup\{\abs{[\vec x]_X[\vec y]_Y\vec e}:\vec e\in\bar B_{\R^n}\} \\
  &\approx\sup\{\Norm{\vec x\cdot[\vec y]_Y\vec e}{X}:\vec e\in\bar B_{\R^n}\} \\
  &=\sup\Big\{\BNorm{s\mapsto\abs{[\vec y]_Y\vec x(s)\cdot\vec e}}{X}:\vec e\in\bar B_{\R^n}\Big\} \\
  &\leq\BNorm{s\mapsto \abs{[\vec y]_Y\vec x(s)}}{X} \\
  &\approx\BNorm{s\mapsto \Norm{\vec y\cdot\vec x(s)}{Y} }{X} \\
  &=\Norm{\vec x\cdot\vec y}{X(Y)}.
\end{split}
\end{equation*}
In the other direction, we have
\begin{equation*}
\begin{split}
  \Norm{\vec x\cdot\vec y}{X(Y)}
  &\approx\BNorm{s\mapsto \abs{[\vec y]_Y\vec x(s)}}{X} \\
  &\leq\BNorm{s\mapsto \Babs{\sum_{i=1}^n\vec e_i (\vec e_i\cdot[\vec y]_Y\vec x(s))}}{X} \\
  &\leq\BNorm{s\mapsto \sum_{i=1}^n\abs{[\vec y]_Y\vec e_i\cdot\vec x(s)}}{X} \\
  &\lesssim\sum_{i=1}^n\Norm{s\mapsto \abs{[\vec y]_Y\vec e_i\cdot\vec x(s)}}{X} \\
  &=\sum_{i=1}^n\Norm{[\vec y]_Y\vec e_i\cdot\vec x}{X} \\
  &\approx\sum_{i=1}^n\abs{[\vec x]_X[\vec y]_Y\vec e_i} \\
  &\lesssim\abs{[\vec x]_X[\vec y]_Y}.
\end{split}
\end{equation*}
\end{proof}

\begin{corollary}\label{cor:equiNormsQBanachFn}
Let $X$ and $Y$ be quasi-normed spaces, where $X$ (resp. $Y$) is related to a quasi-normed function lattice.
Then for all  $\vec x\in X\otimes\R^n$, and $\vec y\in Y\otimes\R^n$, we have
\begin{equation*}
  \abs{[\vec x]_X[\vec y]_Y}
  \approx\begin{cases}
  \Norm{\vec x\odot\vec y}{X(Y)}, & \text{resp.} \\
  \Norm{\vec x\odot\vec y}{Y(X)}. \end{cases}
\end{equation*}
where the implied constants depend only on $n$ and the quasi-triangle constants of $X$, $Y$, and the quasi-normed function lattice related to one of these spaces. In particular, if both $X$ and $Y$ are related to a quasi-normed function lattice, then
\begin{equation*}
  \abs{[\vec x]_X[\vec y]_Y}
  \approx\Norm{\vec x\odot\vec y}{X(Y)}
  \approx\Norm{\vec x\odot\vec y}{Y(X)}.
\end{equation*}
\end{corollary}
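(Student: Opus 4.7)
The plan is to reduce Corollary \ref{cor:equiNormsQBanachFn} to Proposition \ref{prop:equiNormsQBanachFn} by transporting everything through the operator $J$ that relates $X$ to a quasi-normed function lattice $E$. Concretely, for $\vec x=\sum_i x_i\otimes\vec v_i\in X\otimes\R^n$, I set $J\vec x:=\sum_i Jx_i\otimes\vec v_i\in E\otimes\R^n$. The key observation is that $J$ is isometric, so
\begin{equation*}
  \Norm{\vec x\cdot\vec e}{X}=\Norm{J(\vec x\cdot\vec e)}{E}=\Norm{(J\vec x)\cdot\vec e}{E},
\end{equation*}
and hence the semi-quasi-norms $\Norm{\cdot}{X(\vec x)}$ and $\Norm{\cdot}{E(J\vec x)}$ on $\R^n$ coincide. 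By Proposition \ref{prop:John}, we may therefore choose $[\vec x]_X=[J\vec x]_E$ (the ambiguity mentioned after the proposition being harmless).

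Next, directly from the extended Definition \ref{def:lattice} of $X(Y)$ in the related-to-a-lattice setting, we have
\begin{equation*}
  \Norm{\vec x\odot\vec y}{X(Y)}=\Norm{J\vec x\odot\vec y}{E(Y)}.
\end{equation*}
Applying Proposition \ref{prop:equiNormsQBanachFn} to the pair $(J\vec x,\vec y)\in E\otimes\R^n\times Y\otimes\R^n$ (noting that $E$ is a genuine quasi-normed function lattice), we obtain
\begin{equation*}
  \abs{[\vec x]_X[\vec y]_Y}=\abs{[J\vec x]_E[\vec y]_Y}\approx\Norm{J\vec x\odot\vec y}{E(Y)}=\Norm{\vec x\odot\vec y}{X(Y)},
\end{equation*}
with implicit constants depending on $n$ and the quasi-triangle constants of $E$ and $Y$. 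The symmetric case (when $Y$ is related to a lattice $F$ via some $K:Y\to F$) is handled identically by applying Proposition \ref{prop:equiNormsQBanachFn} to $(\vec x, K\vec y)$ and using the symmetric part of that proposition to get $\Norm{\vec x\odot K\vec y}{Y(X)}=\Norm{\vec x\odot\vec y}{Y(X)}$.

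The final assertion, when both $X$ and $Y$ are related to lattices $E$ and $F$ respectively, follows by applying either reduction (both yield $\abs{[\vec x]_X[\vec y]_Y}$ on one side); alternatively, one can go to $(J\vec x,K\vec y)\in E\otimes\R^n\times F\otimes\R^n$ and apply the second part of Proposition \ref{prop:equiNormsQBanachFn} directly. I do not anticipate any real obstacle: the content of the corollary is purely notational, since the definition of $X(Y)$ in the related-to-a-lattice case is built precisely to make the transport via $J$ isometric, and likewise for the reducing matrix. The only point worth verifying carefully is the compatibility $[\vec x]_X=[J\vec x]_E$, which is an immediate consequence of $J$ being an isometry on $X$ combined with Proposition \ref{prop:John}.
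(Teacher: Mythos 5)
Your proposal is correct and follows essentially the same route as the paper: reduce to Proposition \ref{prop:equiNormsQBanachFn} by transporting $\vec x$ to $J\vec x\in E\otimes\R^n$, using the isometry $J$ to identify the reducing matrices and the fact that $\Norm{\vec x\odot\vec y}{X(Y)}=\Norm{J\vec x\odot\vec y}{E(Y)}$ holds by Definition \ref{def:lattice}. The only cosmetic difference is that you exploit the exact equality of the semi-quasi-norms $\Norm{\cdot}{X(\vec x)}$ and $\Norm{\cdot}{E(J\vec x)}$ to choose $[\vec x]_X=[J\vec x]_E$ literally, whereas the paper merely records $\abs{[\vec x]_X\vec e}\approx\abs{[J\vec x]_E\vec e}$ and propagates the equivalence; both are sound.
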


\begin{proof}
We consider the case that $X$ is related to a quasi-normed function lattice $E$ via a linear operator $J:X\to E$ such that $\Norm{x}{X}=\Norm{Jx}{E}$ for all $x\in X$; the other cases follow by symmetry.
Then for each $\vec e\in\R^n$, we have
\begin{equation*}
  \abs{[\vec x]_X\vec e}
  \approx\Norm{\vec x\cdot\vec e}{X}
  =\Norm{J(\vec x\cdot\vec e)}{E}=\Norm{(J\vec x)\cdot\vec e}{E}
  \approx\abs{[J\vec x]_E\vec e}.
\end{equation*}
Hence
\begin{equation*}
\begin{split}
    \abs{[\vec x]_X[\vec y]_Y}
    &=\sup\{\abs{[\vec x]_X[\vec y]_Y\vec e}:\vec e\in\bar B_{\R^n}\} \\
    &\approx\sup\{\abs{[J\vec x]_E[\vec y]_Y\vec e}:\vec e\in\bar B_{\R^n}\} 
    =\abs{[J\vec x]_E[\vec y]_Y}.
\end{split}
\end{equation*}
Thus Proposition \ref{prop:equiNormsQBanachFn} guarantees that
\begin{equation*}
 \abs{[\vec x]_X[\vec y]_Y}
 \approx\abs{[J\vec x]_E[\vec y]_Y}
 \approx\Norm{J\vec x\odot\vec y}{E(Y)}
 =\Norm{\vec x\odot\vec y}{X(Y)},
\end{equation*}
where the last step is simply the definition of $\Norm{\ }{X(Y)}$.
\end{proof}

\section{Bilinear operators with a simple bound}\label{sec:bilin1}

Having studied the properties of the quantity $\abs{[\vec x]_X[\vec y]_Y}$ in its own right, we now show that this quantity also qualifies for an upper bound in vector-valued extensions of bilinear operators; this is the ultimate justification that this is the correct generalisation of the $\Cave{\vec x}_X\cdot\Cave{\vec y}_Y$.

\begin{proposition}\label{prop:normToCave}
Let $X,Y,Z$ be quasi-normed spaces and
let $\tau:X\times Y\to Z$ be a bounded bilinear operator.
Then for all $\vec f\in X\otimes\R^n$ and $\vec g\in Y\otimes\R^n$, we have
\begin{equation*}
  \Norm{\tau(\vec f,\vec g)}{Z}\lesssim\Norm{\tau}{}\abs{[\vec x]_{X}[\vec y]_{Y}}
\end{equation*}
where the implied constant depends only on $n$ and the quasi-triangle constants of $X,Y,Z$.
\end{proposition}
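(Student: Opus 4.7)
The plan is to combine Proposition~\ref{prop:equiNormsQBanach}, which identifies $\abs{[\vec f]_X[\vec g]_Y}\approx\Norm{\vec f\odot\vec g}{X\otimes_\pi^n Y}$, with the algebraic fact that a bilinear $\tau$ factors through the tensor product $X\otimes Y$. This factorisation will let me replace the given, possibly inefficient, representation of $\vec f\odot\vec g$ by one that nearly realises the $n$-term projective norm, after which the estimate follows from an $n$-fold quasi-triangle inequality in $Z$ and the boundedness of $\tau$.

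First I would invoke the universal property of the algebraic tensor product used in Section~\ref{sec:tensor} (where $X\otimes Y$ is constructed as a subspace of $B(X,Y)^{\sharp}$) to extend $\tau$ uniquely to a linear map $\widetilde\tau:X\otimes Y\to Z$ satisfying $\widetilde\tau(u\otimes v)=\tau(u,v)$. Then
\begin{equation*}
  \tau(\vec f,\vec g):=\sum_{i=1}^n\tau(f_i,g_i)=\widetilde\tau(\vec f\odot\vec g),
\end{equation*}
so the left-hand side depends only on the tensor $\vec f\odot\vec g\in X\otimes Y$ and not on any particular representation as a sum of elementary tensors. This is the conceptual point on which the whole argument rests.

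Next, given $\eps>0$, Proposition~\ref{prop:equiNormsQBanach} lets me choose a (possibly new) $n$-term expansion $\vec f\odot\vec g=\sum_{i=1}^n u_i\otimes v_i$ with
\begin{equation*}
  \sum_{i=1}^n\Norm{u_i}{X}\Norm{v_i}{Y}\leq (1+\eps)\Norm{\vec f\odot\vec g}{X\otimes_\pi^n Y}\lesssim\abs{[\vec f]_X[\vec g]_Y},
\end{equation*}
the implicit constant depending only on $n$ and the quasi-triangle constants of $X,Y$. Applying $\widetilde\tau$, the quasi-triangle inequality in $Z$ (iterated at most $n-1$ times), and the boundedness of $\tau$ then yields
\begin{equation*}
  \Norm{\tau(\vec f,\vec g)}{Z}=\BNorm{\sum_{i=1}^n\tau(u_i,v_i)}{Z}\lesssim\Norm{\tau}{}\sum_{i=1}^n\Norm{u_i}{X}\Norm{v_i}{Y}\lesssim\Norm{\tau}{}\abs{[\vec f]_X[\vec g]_Y},
\end{equation*}
and sending $\eps\to 0$ finishes the proof.

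There is no genuine obstacle here, since Proposition~\ref{prop:equiNormsQBanach} does the heavy lifting. The only subtle point is the representation-independence of $\tau(\vec f,\vec g)$; without it, one could not legitimately swap one $n$-term expansion of $\vec f\odot\vec g$ for another. It is precisely the fact that only $n$ summands are ever present that allows the argument to proceed in full quasi-normed generality, as the iterated quasi-triangle inequality in $Z$ then contributes a constant depending on $n$ but otherwise harmless.
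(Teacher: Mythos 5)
Your proof is correct, and it is essentially a modular repackaging of the paper's own argument. The paper proves the proposition by explicitly exhibiting the $n$-term expansion
\begin{equation*}
  \vec f\odot\vec g=\sum_{i=1}^n\bigl(\vec e_i\cdot[\vec f]_X^{-1}\vec f\bigr)\otimes\bigl(\vec e_i\cdot[\vec f]_X\vec g\bigr),
\end{equation*}
applying $\tau$ termwise, and bounding the two factors by $1$ and $\abs{[\vec f]_X[\vec g]_Y}$ respectively; this is the same construction that underlies the estimate $\Norm{\vec f\odot\vec g}{X\otimes_\pi^n Y}\lesssim\abs{[\vec f]_X[\vec g]_Y}$ inside Proposition~\ref{prop:equiNormsQBanach}, as the paper itself notes. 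You instead make the representation-independence of $\tau(\vec f,\vec g)=\widetilde\tau(\vec f\odot\vec g)$ an explicit step (the paper uses it silently at the ``Hence'' step between the tensor identity and the decomposition of $\tau(\vec f,\vec g)$), apply the $n$-fold quasi-triangle inequality to a near-optimal expansion, and then invoke Proposition~\ref{prop:equiNormsQBanach} as a black box. The small payoff of your route is that it cleanly separates the abstract inequality $\Norm{\tau(\vec f,\vec g)}{Z}\lesssim\Norm{\tau}{}\,\Norm{\vec f\odot\vec g}{X\otimes_\pi^n Y}$ from the identification of the latter with $\abs{[\vec f]_X[\vec g]_Y}$; the paper's route avoids even a nominal appeal to the infimum and keeps the specific reducing-matrix expansion in view, which is a bit more self-contained. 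One minor remark: the existence of a $(1+\eps)$-optimal $n$-term expansion is a direct consequence of the definition of the infimum in \eqref{eq:nTensorNorm}, not of Proposition~\ref{prop:equiNormsQBanach} itself; only the bound $\Norm{\vec f\odot\vec g}{X\otimes_\pi^n Y}\lesssim\abs{[\vec f]_X[\vec g]_Y}$ comes from that proposition.
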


\begin{remark}
The case when $X,Y$ are normed spaces $Z=\R$ is \cite[Proposition 4.2]{Hyt:convex}. The case when $X,Y,Z$ is are normed spaces would be a direct consequence of that special case via
\begin{equation*}
  \Norm{\tau(\vec f,\vec g)}{Z}=\sup_{z^*\in\bar B_{Z^*}}\abs{\pair{\tau(\vec f,\vec g)}{z^*}},
\end{equation*}
where each $\pair{\tau(\cdot,\cdot)}{z^*}$ is a bilinear form $X\times Y\to\R$ of norm at most $\Norm{\tau}{}$. The general case does not seem to allow such a simple reduction, since it may happen that $Z^*=\{0\}$ e.g. in the important special case that $Z=L^p$ for $p\in(0,1)$. Nevertheless, the direct proof in the general case below is based on the same idea.
\end{remark}

\begin{proof}
The proof follows the same pattern as that of the estimate $\Norm{\vec x\odot\vec y}{X\otimes_\pi Y}\lesssim\abs{[\vec x]_X[\vec y]_Y}$ in Proposition \ref{prop:equiNormsBanach}.
We have
\begin{equation*}
  \vec x\odot\vec y
  =[\vec x]_X^{-1}\vec x\odot [\vec x]_X\vec y
  =\sum_{i=1}^n(\vec e_i\cdot [\vec x]_X^{-1}\vec x)\otimes (\vec e_i\cdot[\vec x]_X\vec y).
\end{equation*}
Hence,
\begin{equation*}
  \tau(\vec x,\vec y)
  =\sum_{i=1}^n\tau(\vec e_i\cdot [\vec x]_X^{-1}\vec x,\vec e_i\cdot[\vec x]_X\vec y),
\end{equation*}
and the quasi-triangle inequality in $Z$ gives
\begin{equation*}
  \Norm{\tau(\vec x,\vec y)}{Z}
  \lesssim\sum_{i=1}^n \Norm{\tau}{} \Norm{\vec e_i\cdot [\vec x]_X^{-1}\vec x}{X}\Norm{\vec e_i\cdot[\vec x]_X\vec y}{Y},
\end{equation*}
where the implied constant may depend on $n$ and the quasi-triangle constant of $Z$.

Here, as in the proof of Proposition \ref{prop:equiNormsBanach},
\begin{equation*}
\begin{split}
  \Norm{\vec e_i\cdot [\vec x]_X^{-1}\vec x}{X}
  &=\Norm{[\vec x]_X^{-1}P_{\range([\vec X]_X)}\vec e_i\cdot \vec x}{X} \\
  &\approx\abs{[\vec x]_X[\vec x]_X^{-1}P_{\range([\vec X]_X)}\vec e_i}
  =\abs{P_{\range([\vec X]_X)}\vec e_i}\leq 1,
\end{split}
\end{equation*}
where the implied constant may depend on $n$ and the quasi-triangle constant of $X$, 
and
\begin{equation*}
  \Norm{\vec e_i\cdot[\vec x]_X\vec y}{Y}
  =\Norm{[\vec x]_X\vec e_i\cdot\vec y}{Y}
  \approx\abs{[\vec y]_Y[\vec x]_X\vec e_i}
  \leq\abs{[\vec y]_Y[\vec x]_X}=\abs{[\vec x]_X[\vec y]_Y},
\end{equation*}
where the implied constant may depend on $n$ and the quasi-triangle constant of $Y$. 
Altogether,
\begin{equation*}
  \Norm{\tau(\vec x,\vec y)}{Z}\lesssim\abs{[\vec x]_X[\vec y]_Y}.
\end{equation*}
where the implied constant may depend on $n$ and the quasi-triangle constants of $X,Y,Z$. 
\end{proof}

\section{Bilinear operators with a two-term bound}\label{sec:bilin2}

We now turn to the more complicated situation of two-term bounds as in the Kato--Ponce inequality discussed in the Introduction. A key to this is the following finite-dimensional lemma, showing that a bilinear form with a two-term bound can always be split into a sum of two bilinear forms, each of which is dominated by one of the terms only.

\begin{lemma}\label{lem:bound2eq}
Let $Z$ be a quasi-normed space and $\tau:\R^m\times\R^n\to Z$ be a bilinear operator such that, for some matrices $A_0,A_1,B_0,B_1$ we have
\begin{equation*}
  \Norm{\tau(x,y)}{Z}\leq\abs{A_0x}\abs{B_0y}+\abs{A_1x}\abs{B_1y}
\end{equation*}
for all vectors $x,y$. Then there are bilinear forms $\tau_0,\tau_1:\R^m\times\R^n\to Z$ such that $\tau=\tau_0+\tau_1$ and
\begin{equation*}
  \Norm{\tau_k(x,y)}{Z}\lesssim\abs{A_kx}\abs{B_ky},\quad k=0,1.
\end{equation*}
\end{lemma}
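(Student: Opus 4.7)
My plan is to reduce, by elementary linear algebra on $\R^m$ and $\R^n$, to a basis in which the quadratic forms $|A_k\,\cdot\,|^2$ and $|B_k\,\cdot\,|^2$ are simultaneously diagonalised, and then to construct the decomposition entry-by-entry on a tensor basis.

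First, I would normalise. Set $M := A_0^*A_0 + A_1^*A_1$ on $\R^m$. The hypothesis forces $\tau(x,\cdot)=0$ on $\ker M = \ker A_0 \cap \ker A_1$, so after restricting to $(\ker M)^\perp$ and extending the sought $\tau_k$ by $0$, we may assume $M$ is positive definite; similarly for $N := B_0^*B_0 + B_1^*B_1$. The change of variable $\tilde x := M^{1/2}x$ with $\tilde A_k := A_k M^{-1/2}$ preserves $|A_k x| = |\tilde A_k \tilde x|$ and gives $\tilde A_0^*\tilde A_0 + \tilde A_1^*\tilde A_1 = I$; performing the analogue on the $y$-side and dropping tildes, we may assume $A_0^*A_0 + A_1^*A_1 = I_{\R^m}$ and $B_0^*B_0 + B_1^*B_1 = I_{\R^n}$. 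The commuting operators $A_0^*A_0$ and $A_1^*A_1 = I - A_0^*A_0$ then share an orthonormal eigenbasis $(e_i)_{i=1}^m$ with $A_0^*A_0\,e_i = \lambda_i e_i$, $\lambda_i \in [0,1]$; analogously $(f_j)_{j=1}^n$ with $B_0^*B_0\,f_j = \mu_j f_j$. Writing $x = \sum_i a_i e_i$ and $y = \sum_j b_j f_j$, one has $|A_0 x|^2 = \sum_i \lambda_i a_i^2$, $|A_1 x|^2 = \sum_i (1-\lambda_i) a_i^2$, and the analogous identities for $y$.

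Next I would expand $\tau(x,y) = \sum_{i,j} T_{ij}\,a_i b_j$ with $T_{ij} \in Z$ and test the hypothesis at $x = e_i$, $y = e_j$ to obtain
\[
\|T_{ij}\|_Z \leq \sqrt{\lambda_i \mu_j} + \sqrt{(1-\lambda_i)(1-\mu_j)}.
\]
This is exactly the condition allowing $\|T_{ij}\|_Z$ to be split between the two summands, so I may pick $\alpha_{ij} \in [0,1]$ with $\alpha_{ij}\|T_{ij}\|_Z \leq \sqrt{\lambda_i \mu_j}$ and $(1-\alpha_{ij})\|T_{ij}\|_Z \leq \sqrt{(1-\lambda_i)(1-\mu_j)}$, and set
\[
\tau_0(x,y) := \sum_{i,j} \alpha_{ij} T_{ij} a_i b_j, \qquad \tau_1 := \tau - \tau_0.
\]
Both are bilinear and sum to $\tau$. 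The quasi-triangle inequality in $Z$ applied to an $mn$-term sum (via the Aoki--Rolewicz $p$-norm) together with Cauchy--Schwarz in the finite index sums then give
\[
\|\tau_0(x,y)\|_Z \lesssim \sum_{i,j} \sqrt{\lambda_i \mu_j}\,|a_i||b_j| \lesssim |A_0 x||B_0 y|,
\]
and symmetrically for $\tau_1$, with implied constants depending on $m,n$ and the quasi-triangle constant of $Z$.

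The step I expect to be the main obstacle is the choice of strategy rather than any single calculation. A tempting alternative is to factor $\tau$ through the injection $x \mapsto (A_0 x, A_1 x) \in \R^{m_0+m_1}$: on the ambient product space the bound $|u_0||v_0| + |u_1||v_1|$ immediately forces the cross bilinear components to vanish, yielding a clean block decomposition. The obstruction is that the resulting form is only defined on the image subspace, and extending it to the full product while preserving the componentwise $|u_k|$ control is generally impossible, because any projection onto a ``diagonally embedded'' subspace mixes the two blocks. Simultaneous diagonalisation of $A_k^*A_k$ and $B_k^*B_k$ bypasses this by transferring the whole problem to the scalar data $(\lambda_i,\mu_j,T_{ij})$, after which the decomposition collapses to a one-dimensional convex combination in each tensor entry.
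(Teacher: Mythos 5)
Your proof is correct, and it takes a genuinely different route from the paper's. The paper proceeds through four successive cases of increasing generality: first $A_0=B_0=I$ with $A_1,B_1$ positive semidefinite, where the entries $\tau_{ij}=\tau(e_i,f_j)$ satisfy $\Norm{\tau_{ij}}{Z}\leq 1+\alpha_i\beta_j$ and are split by the factor $(1+\alpha_i\beta_j)^{-1}$; then general $A_1,B_1$ via singular value decomposition; then invertible $A_0,B_0$ via the substitution $u=A_0x$, $v=B_0y$; and finally general $A_0,B_0$ via $\eps$-regularisation followed by a compactness argument (possible because $\tau$ takes values in a finite-dimensional subspace of $Z$). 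Your normalisation $A_0^*A_0+A_1^*A_1=I$ (after quotienting out the common kernel $\ker A_0\cap\ker A_1$) forces $A_0^*A_0$ and $A_1^*A_1=I-A_0^*A_0$ to commute, hence be simultaneously diagonalisable, which reduces the general statement at once to an entrywise convex splitting of $\Norm{T_{ij}}{Z}\leq\sqrt{\lambda_i\mu_j}+\sqrt{(1-\lambda_i)(1-\mu_j)}$ and eliminates both the case distinctions and the limiting step. Your approach is shorter and treats the indices $k=0,1$ symmetrically; the paper's avoids the observation that a positive semidefinite partition of the identity commutes, building instead on strictly elementary substitutions at the price of a limiting argument. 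Both then estimate via the quasi-triangle inequality over $mn$ terms and a finite Cauchy--Schwarz, giving constants with the same dependence on $m$, $n$, and the quasi-triangle constant of $Z$.
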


\begin{proof}
We proceed through several cases of increasing generality.

\subsubsection*{Case: $A_0=B_0=I$ and $A_1,B_1$ are positive semi-definite:}
In this case we have
\begin{equation*}
  A_1x=\sum_{i=1}^m \alpha_i e_i\pair{x}{e_i},\quad
  B_1y=\sum_{j=1}^n \beta_j f_j\pair{y}{f_j},
\end{equation*}
where $(e_i)_{i=1}^m$ and $(f_j)_{j=1}^n$ are orthonormal and $\alpha_i,\beta_j\geq 0$.

Let $x_i:=\pair{x}{e_i}$, $y_j:=\pair{y}{f_j}$.
Then
\begin{equation*}
   \abs{A_1x}\abs{B_1y}
   =\Big(\sum_{i=1}^m\alpha_i^2\abs{x_i}^2\Big)^{1/2} \Big(\sum_{j=1}^n\beta_j^2\abs{y_j}^2\Big)^{1/2}.
\end{equation*}
With $x=e_i$ and $y=f_j$ the assumption then implies that
\begin{equation*}
  \Norm{\tau_{ij}}{Z}:=\Norm{\tau(e_i,f_j)}{Z}\leq 1+\abs{A_1e_i}\abs{B_1e_j}=1+\alpha_i\beta_j.
\end{equation*}
We define
\begin{equation*}
\begin{split}
  \tau_0(x,y)&:=\sum_{i=1}^m\sum_{j=1}^n\pair{x}{e_i}\frac{\tau_{ij}}{1+\alpha_i\beta_j}\pair{y}{f_j} \\
  \tau_1(x,y)&:=\sum_{i=1}^m\sum_{j=1}^n\pair{x}{e_i}\alpha_i\frac{\tau_{ij}}{1+\alpha_i\beta_j}\beta_j\pair{y}{f_j}.
\end{split}
\end{equation*}
Then
\begin{equation*}
\begin{split}
  \tau_0(x,y)+\tau_1(x,y)
  &=\sum_{i=1}^m\sum_{j=1}^n\pair{x}{e_i}\tau_{ij}\pair{y}{f_j} \\
  &=\tau\Big(\sum_{i=1}^m\pair{x}{e_i}e_i,\sum_{j=1}^n\pair{y}{f_j}f_j\Big)=\tau(x,y).
\end{split}
\end{equation*}
From the quasi-triangle inequality, we have
\begin{equation*}
  \Norm{\tau_0(x,y)}{Z}
  \lesssim\sum_{i=1}^m\sum_{j=1}^n\abs{x_i}\Norm{\frac{\tau_{ij}}{1+\alpha_i\beta_j}}{Z}\abs{y_j}
  \leq\sum_{i=1}^m\sum_{j=1}^n\abs{x_i}\abs{y_j}
  \lesssim\abs{x}\abs{y}
\end{equation*}
and similarly
\begin{equation*}
  \Norm{\tau_1(x,y)}{Z}
  \lesssim 
  \sum_{i=1}^m\sum_{j=1}^n\abs{\alpha_i x_i}\abs{\beta_j y_j}
  =\abs{A_1x}\abs{B_1y}.
\end{equation*}
Hence we obtained the claim in the case under consideration.

\subsubsection*{Case: $A_0=B_0=I$ and $A_1,B_1$ general}
Consider the singular value decompositions
\begin{equation*}
  A_1x=\sum_{i=1}^m \alpha_i \tilde e_i\pair{x}{e_i},\quad
  B_1y=\sum_{j=1}^n \beta_j \tilde f_j\pair{y}{f_j},
\end{equation*}
where each of the systems $(e_i)_{i=1}^m$, $(\tilde e_i)_{i=1}^m$, $(f_i)_{i=1}^n$, and $(\tilde f_i)_{i=1}^n$ is orthonormal and $\alpha_i,\beta_j\geq 0$.
Let
\begin{equation*}
  \tilde A_1x=\sum_{i=1}^m \alpha_i e_i\pair{x}{e_i},\quad
  \tilde B_1y=\sum_{j=1}^n \beta_j f_j\pair{y}{f_j}.
\end{equation*}
These are positive semi-definite matrices with $\abs{A_1x}=\abs{\tilde A_1x}$ and $\abs{B_1y}=\abs{\tilde B_1y}$, so both the assumptions and the conclusions remain equivalent with $\tilde A_1,\tilde B_1$ in place of $A_1,B_1$. Since we already proved the case with non-negative self-adjoint  $\tilde A_1,\tilde B_1$, we also get the case general $A_1,B_1$ by the said equivalence.

\subsubsection*{Case: $A_0,B_0$ invertible and $A_1,B_1$ general}
Let us denote $u:=A_0x$ and $v=B_0y$, hence $x=A_0^{-1}u$ and $y=B_0^{-1}v$. Thus the assumption can be equivalently written as
\begin{equation*}
  \abs{\tau(A_0^{-1}u,B_0^{-1}v)}\leq\abs{u}\abs{v}+\abs{A_1A_0^{-1}u}\abs{B_1B_+0^{-1}v}.
\end{equation*}
Hence the new bilinear form $\tilde\tau:=\tau(A_0^{-1}\cdot,B_0^{-1}\cdot)$ and the matrices $\tilde A_1:=A_1A_0^{-1}$ and $\tilde B_1:=B_1B_0^{-1}$ satisfy
\begin{equation*}
  \abs{\tilde\tau(u,v)}\leq\abs{u}\abs{v}+\abs{\tilde A_1u}\abs{\tilde B_1v},
\end{equation*}
which is exactly as in the previous case. By that case, we find bilinear forms $\tilde\tau_0,\tilde\tau_1$ such that $\tilde\tau=\tilde\tau_0+\tilde\tau_1$ and
\begin{equation*}
  \abs{\tilde\tau_0(u,v)}\lesssim\abs{u}\abs{v},\quad
  \abs{\tilde\tau_1(u,v)}\lesssim\abs{\tilde A_1u}\abs{\tilde B_1 v}.
\end{equation*}
Defining $\tau_i(x,y)_=\tilde\tau_i(A_0 x,B_0 y)$, it follows that
\begin{equation*}
  \tau(x,y)=\tilde\tau(A_0 x,B_0 y)
  =\sum_{i=0}^1\tilde\tau_i(A_0 x,B_0 y)=\sum_{i=0}^1\tau_i(x,y)
\end{equation*}
and
\begin{equation*}
  \abs{\tau_i(x,y)} =\abs{\tilde\tau_i(A_0 x,B_0 y)}
  \lesssim\begin{cases} \abs{A_0 x}\abs{B_0 y}, & i=0, \\ \abs{\tilde A_1A_0 x}\abs{\tilde B_1B_0 y}=\abs{A_1 x}\abs{B_1 y}, & i=1.\end{cases}
\end{equation*}
Thus we have the required decomposition also in this case.

\subsubsection*{Case: $A_0,B_0,A_1,B_1$ general}
Here we begin by observing that, if $(e_i)_{i=1}^m$ and $(f_j)_{j=1}^n$ are bases of $\R^m$ and $\R^n$, respectively, then
\begin{equation*}
  \tau(x,y)=\sum_{i=1}^n\sum_{j=1}^n\pair{x}{e_i}\tau(e_i,f_j)\pair{y}{f_j}\in\lspan\{\tau(e_i,f_j):1\leq i\leq m,1\leq j\leq n\}
\end{equation*}
belongs to a finite-dimensional subspace of $Z$. Replacing $Z$ by this subspace, we may assume without loss of generality that $Z$ is finite-dimensional (and in particular complete) to begin with. This will play a role in compactness argument further below.

Consider the singular value decompositions
\begin{equation*}
  A_k x=\sum_{i=1}^m \alpha_i^k \tilde e_i^k\pair{x}{e_i^k},\quad
  B_k y=\sum_{j=1}^n \beta_j^k \tilde f_j^k\pair{y}{f_j^k},
\end{equation*}
where each of the systems $(e_i^k)_{i=1}^m$, $(\tilde e_i^k)_{i=1}^m$, $(f_i^k)_{i=1}^n$, and $(\tilde f_i^k)_{i=1}^n$ is orthonormal and $\alpha_i^k,\beta_j^k\geq 0$.

For any $\eps\in(0,1)$, the matrices
\begin{equation*}
  A_\eps x:=\sum_{i=1}^m (\alpha_i^0+\eps) \tilde e_i^0\pair{x}{e_i^0},\quad
  B_\eps y=\sum_{j=1}^n (\beta_j^0+\eps) \tilde f_j^0\pair{y}{f_j^0}
\end{equation*}
are invertible. Hence the previous case guarantees the existence of bilinear forms $\tau_{0,\eps},\tau_{1,\eps}:\R^m\times\R^n\to Z$ such that
\begin{equation}\label{eq:tauEpsId}
  \tau=\tau_{0,\eps}+\tau_{1,\eps}
\end{equation}
and
\begin{equation}\label{eq:tauEpsBd}
  \Norm{\tau_{0,\eps}(x,y)}{Z}\lesssim \abs{A_\eps x}\abs{B_\eps y},\quad
  \Norm{\tau_{1,\eps}(x,y)}{Z}\lesssim \abs{A_1 x}\abs{B_1 y}.
\end{equation}
As $\eps\to 0$, it is evident that $A_\eps\to A$ and $B_\eps\to B$.
On the other hand, we have
\begin{equation*}
  \tau_{k,\eps}(x,y)
  =\sum_{i=1}^m\sum_{j=1}^n\pair{x}{e_i^k}\tau_{k,\eps}(e_i^k,f_j^k)\pair{y}{e_j^k}.
\end{equation*}
Testing the estimate \eqref{eq:tauEpsBd} with $x=e_i^k$, $y=f_j^k$ shows that
\begin{equation*}
  \Norm{\tau_{j,\eps}(e_i^k,f_j^k)}{Z}\lesssim
  \begin{cases}\abs{A_\eps e_i^0}\abs{B_\eps f_j^0}=(\alpha_i^0+\eps)(\beta_j^0+\eps), & k=0, \\
  \abs{A_1 e_i^1}\abs{B_1 f_j^1}, & k=1.\end{cases}
\end{equation*}
Hence $\tau_{k,\eps}(e_i^k,f_j^k)$ belongs to a bounded subset of $Z$. Since $Z$ is finite-dimensional (as we assumed without loss of generality in the beginning of this case), we can find a sequence $\eps_\ell\to 0$ such that each $\tau_{k,\eps_\ell}(e_i^k,f_j^k)$ converges to a limit, say $\tau_k(e_i^k,f_j^k)$ for each $1\leq i\leq m$, $1\leq j\leq n$, and $k=0,1$. We then define a bilinear forms $\tau_k:\R^m\times\R^n\to Z$ by
\begin{equation*}
   \tau_k(x,y)
  =\sum_{i=1}^m\sum_{j=1}^n\pair{x}{e_i^k}\tau_{k}(e_i^k,f_j^k)\pair{y}{e_j^k}.
\end{equation*}
Then $\tau_k(x,y)=\lim_{\ell\to\infty}\tau_{k,\eps_\ell}(x,y)$. Taking the limit of \eqref{eq:tauEpsId} along $\eps=\eps_\ell$, it follows that
\begin{equation*}
   \tau=\tau_0+\tau_1.
\end{equation*}
The same limit in \eqref{eq:tauEpsBd} gives
\begin{equation*}
  \Norm{\tau_{0}(x,y)}{Z}\lesssim \lim_{\ell\to\infty}\abs{A_{\eps_\ell} x}\abs{B_{\eps_\ell} y}=\abs{A_0 x}\abs{B_0 y},\quad
  \Norm{\tau_{1}(x,y)}{Z}\lesssim \abs{A_1 x}\abs{B_1 y}.
\end{equation*}
This completes the proof of the lemma.
\end{proof}

Corresponding to $\vec x\in X\otimes\R^n$‚ we recall the norm on $\R^n$ given by
\begin{equation*}
  \Norm{\vec u}{X(\vec x)}:=\Norm{\vec x\cdot\vec u}{X}\approx\abs{[\vec x]_X\vec u}.
\end{equation*}
Then with $\vec{\vec\alpha}\in\R^n\otimes\R^n\equiv\R^{n\times n}$, we can iterate this construction, with $\vec{\vec\alpha}$ in place of $\vec x$ and $X(\vec x)$ in place of $X$, to consider the norm on $\R^n$ given by
\begin{equation*}
  \vec u\mapsto \Norm{\vec{\vec \alpha}\cdot\vec u}{X(\vec x)}\approx \abs{[\vec{\vec\alpha}]_{X(\vec x)}\vec u}.
\end{equation*}

\begin{lemma}\label{lem:compareRedMats}
If $(\vec e_i)_{i=1}^n$ is an orthonormal basis and $\vec{\vec e}=\sum_{i=1}^n\vec e_i\otimes\vec e_i$ is the identity, then
\begin{equation*}
  [\vec{\vec e}]_{X(\vec x)}\approx [\vec x]_X.
\end{equation*}
\end{lemma}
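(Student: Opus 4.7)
The plan is to reduce the statement to a direct calculation based on the fact that $\vec{\vec e}$ acts as the identity under the dot product operation defined at the start of Section \ref{sec:convex}. Recall that for $\vec w = \sum_{i=1}^n w_i \otimes \vec v_i \in W \otimes \R^n$, the dot product with $\vec u \in \R^n$ is $\vec w \cdot \vec u = \sum_{i=1}^n (\vec v_i \cdot \vec u) w_i \in W$. Applying this to $\vec{\vec e} = \sum_{i=1}^n \vec e_i \otimes \vec e_i \in \R^n \otimes \R^n$, the orthonormality of $(\vec e_i)$ gives
\begin{equation*}
  \vec{\vec e} \cdot \vec u = \sum_{i=1}^n (\vec e_i \cdot \vec u)\vec e_i = \vec u
\end{equation*}
for every $\vec u \in \R^n$, so that $\vec{\vec e}$ really is a name for the identity operator viewed as an element of $\R^n \otimes \R^n$.

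With this identity in hand, the two reducing matrices just compute the same quasi-norm on $\R^n$, up to constants. Concretely, chaining the defining relations $\Norm{\vec u}{X(\vec x)} \approx \abs{[\vec x]_X \vec u}$ (Proposition \ref{prop:John} applied to $\Norm{\cdot}{X(\vec x)}$) and $\Norm{\vec{\vec e} \cdot \vec u}{X(\vec x)} \approx \abs{[\vec{\vec e}]_{X(\vec x)} \vec u}$ (the iterated construction immediately preceding the lemma) yields, for every $\vec u \in \R^n$,
\begin{equation*}
  \abs{[\vec{\vec e}]_{X(\vec x)} \vec u}
  \approx \Norm{\vec{\vec e} \cdot \vec u}{X(\vec x)}
  = \Norm{\vec u}{X(\vec x)}
  = \Norm{\vec x \cdot \vec u}{X}
  \approx \abs{[\vec x]_X \vec u}.
\end{equation*}
Since both $[\vec{\vec e}]_{X(\vec x)}$ and $[\vec x]_X$ are positive semi-definite matrices on $\R^n$ defining the same quasi-norm up to multiplicative constants (depending only on $n$ and the quasi-triangle constant of $X$), they are comparable in the same sense in which reducing matrices are ever comparable, namely as unequal but interchangeable representatives of the semi-norm they encode. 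There is no real obstacle here; the only subtlety is notational, in correctly parsing the double-vector object $\vec{\vec e}$ and the iterated reducing-matrix construction so as to recognise the content of the lemma as a tautology once $\vec{\vec e} \cdot \vec u = \vec u$ has been observed.
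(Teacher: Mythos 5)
Your proof is correct and follows essentially the same route as the paper's: both verify that $\vec{\vec e}\cdot\vec u=\vec u$ and then chain the defining relations $\abs{[\vec{\vec e}]_{X(\vec x)}\vec u}\approx\Norm{\vec{\vec e}\cdot\vec u}{X(\vec x)}=\Norm{\vec u}{X(\vec x)}=\Norm{\vec x\cdot\vec u}{X}\approx\abs{[\vec x]_X\vec u}$.
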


\begin{proof}
By concatenating the various definitions, we have
\begin{equation*}
   \abs{[\vec{\vec e}]_{X(\vec x)}\vec u}
   \approx\Norm{\vec{\vec e}\cdot\vec u}{X(\vec x)}
   =\Norm{\vec u}{X(\vec x)}
   =\Norm{\vec x\cdot\vec u}{X}
   \approx\abs{[\vec x]_X\vec u}.
\end{equation*}
\end{proof}

We are now ready to prove the vector-valued self-improvement of bilinear estimates with a two-term bound:

\begin{theorem}\label{thm:2termBootstrap}
Let $X_0,X_1,Y_0,Y_1,Z$ be quasi-normed spaces, and
\begin{equation*}
  \tau:X_0\cap X_1\times Y_0\cap Y_1\to Z
\end{equation*}
a bilinear operator with estimate
\begin{equation*}
  \abs{\tau(f,g)}\leq \Norm{f}{X_0}\Norm{g}{Y_0}+\Norm{f}{X_1}\Norm{g}{Y_1}.
\end{equation*}
Then
\begin{equation*}
  \abs{\tau(\vec f,\vec g)}:=\Babs{\sum_{i=1}^n \tau(f_i,g_i)}
  \lesssim\abs{[\vec f]_{X_0}[\vec g]_{Y_0}}+\abs{[\vec f]_{X_1}[\vec g]_{Y_1}},
\end{equation*}
where the implied constant depends only on $n$.
\end{theorem}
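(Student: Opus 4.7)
The plan is to reduce the infinite-dimensional bilinear form $\tau$ to a finite-dimensional one on $\R^n\times\R^n$ so that the finite-dimensional splitting provided by Lemma \ref{lem:bound2eq} becomes applicable, and then to feed each resulting one-term bilinear form into Proposition \ref{prop:normToCave}.

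First, I would define the auxiliary bilinear map $T:\R^n\times\R^n\to Z$ by
\begin{equation*}
  T(\vec u,\vec v):=\tau(\vec f\cdot\vec u,\vec g\cdot\vec v),
\end{equation*}
which makes sense since $\vec f\cdot\vec u\in X_0\cap X_1$ and $\vec g\cdot\vec v\in Y_0\cap Y_1$. Writing $A_k:=[\vec f]_{X_k}$ and $B_k:=[\vec g]_{Y_k}$, the hypothesis on $\tau$ together with the defining property $\Norm{\vec f\cdot\vec u}{X_k}\approx\abs{A_k\vec u}$ and $\Norm{\vec g\cdot\vec v}{Y_k}\approx\abs{B_k\vec v}$ of the reducing matrices gives
\begin{equation*}
  \Norm{T(\vec u,\vec v)}{Z}\lesssim\abs{A_0\vec u}\abs{B_0\vec v}+\abs{A_1\vec u}\abs{B_1\vec v}.
\end{equation*}
This is precisely the hypothesis of Lemma \ref{lem:bound2eq}, which produces a splitting $T=T_0+T_1$ into bilinear forms $T_k:\R^n\times\R^n\to Z$ each obeying the one-term bound
\begin{equation*}
  \Norm{T_k(\vec u,\vec v)}{Z}\lesssim\abs{A_k\vec u}\abs{B_k\vec v}.
\end{equation*}

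Next, I would equip $\R^n$ with the (semi-)quasi-norms $\Norm{\vec u}{\tilde X_k}:=\abs{A_k\vec u}$ and $\Norm{\vec v}{\tilde Y_k}:=\abs{B_k\vec v}$. Then each $T_k:\tilde X_k\times\tilde Y_k\to Z$ is bounded with operator norm $\lesssim 1$, so Proposition \ref{prop:normToCave} applied to $T_k$ with the identity tensor $\vec{\vec e}=\sum_{i=1}^n\vec e_i\otimes\vec e_i\in\tilde X_k\otimes\R^n$ (and similarly for $\tilde Y_k$) yields
\begin{equation*}
  \Norm{T_k(\vec{\vec e},\vec{\vec e})}{Z}\lesssim\abs{[\vec{\vec e}]_{\tilde X_k}[\vec{\vec e}]_{\tilde Y_k}}.
\end{equation*}
A direct unwinding of the definitions (alternatively, Lemma \ref{lem:compareRedMats}) gives $[\vec{\vec e}]_{\tilde X_k}\approx A_k=[\vec f]_{X_k}$ and $[\vec{\vec e}]_{\tilde Y_k}\approx B_k=[\vec g]_{Y_k}$, so the right-hand side is comparable to $\abs{[\vec f]_{X_k}[\vec g]_{Y_k}}$.

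Finally, the identity $T_k(\vec{\vec e},\vec{\vec e})=\sum_{i=1}^n T_k(\vec e_i,\vec e_i)$ together with $T(\vec e_i,\vec e_i)=\tau(f_i,g_i)$ gives
\begin{equation*}
  \tau(\vec f,\vec g)=\sum_{i=1}^n\tau(f_i,g_i)=T_0(\vec{\vec e},\vec{\vec e})+T_1(\vec{\vec e},\vec{\vec e}),
\end{equation*}
and one application of the quasi-triangle inequality in $Z$ yields the claimed estimate. I expect the main obstacle to have already been absorbed into Lemma \ref{lem:bound2eq}, whose general case required a careful compactness/limiting argument; once that lemma is in hand, the rest is a matter of lining up the reducing-matrix machinery with the one-term bootstrap of Proposition \ref{prop:normToCave} and identifying the correct reducing matrices via the identity tensor $\vec{\vec e}$.
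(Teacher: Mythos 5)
Your proof is correct and follows essentially the same route as the paper's: reduce to a finite-dimensional bilinear form $T(\vec u,\vec v)=\tau(\vec f\cdot\vec u,\vec g\cdot\vec v)$ on $\R^n\times\R^n$, split $T=T_0+T_1$ via Lemma \ref{lem:bound2eq}, apply Proposition \ref{prop:normToCave} to each piece, and evaluate at the identity tensor $\vec{\vec e}$ via Lemma \ref{lem:compareRedMats}. The only cosmetic difference is that you equip $\R^n$ with the explicit norms $\abs{A_k\cdot}$, $\abs{B_k\cdot}$ whereas the paper uses the equivalent semi-quasi-norms $\Norm{\cdot}{X_k(\vec f)}$, $\Norm{\cdot}{Y_k(\vec g)}$.
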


\begin{proof}
For fixed $\vec f\in X_0\cap X_1$ and $\vec g\in Y_0\cap Y_1$, let $\sigma:\R^n\times\R^n\to Z$ be the bilinear form
\begin{equation*}
  \sigma(\vec\alpha,\vec\beta):= \sum_{i,j=1}^n \alpha_i \tau(f_i,g_j)\beta_j=\tau\Big(\sum_{i=1}^n\alpha_i f_i,\sum_{j=1}^n\beta_j g_j\Big)
  =\tau(\vec\alpha\cdot\vec f,\vec\beta\cdot\vec g).
\end{equation*}
The assumption says that
\begin{equation*}
  \Norm{\sigma(\vec\alpha,\vec\beta)}{Z}
  \leq \sum_{k=0}^1\Norm{\vec\alpha\cdot\vec f}{X_k}\Norm{\vec\beta\cdot\vec g}{Y_k}
  \approx\sum_{k=0}^1\abs{[\vec f]_{X_k}\vec\alpha}\abs{[\vec g]_{Y_k}\vec\beta}.
\end{equation*}
Then Lemma \ref{lem:bound2eq} implies the existence of bilinear forms $\sigma_k:\R^n\times\R^n\to Z$ such that
\begin{equation*}
  \sigma=\sigma_0+\sigma_1
\end{equation*}
and
\begin{equation*}
  \Norm{\sigma_k(\vec\alpha,\vec\beta)}{Z}
  \lesssim\abs{[\vec f]_{X_k}\vec\alpha}\abs{[\vec g]_{Y_k}\vec\beta}
  \approx\Norm{\vec\alpha}{X_k(\vec f)}\Norm{\vec\beta}{Y_k(\vec g)},\quad
  \forall\vec\alpha,\vec\beta\in\R^n.
\end{equation*}
By Proposition \ref{prop:normToCave}, it follows that
\begin{equation*}
  \Norm{\sigma_k(\vec{\vec\alpha},\vec{\vec\beta})}{Z}
  \lesssim\abs{[\vec{\vec\alpha}]_{X_k(\vec f)}[\vec{\vec\beta}]_{Y_k(\vec g)}},\quad \forall\vec\alpha,\vec\beta\in\R^n\otimes\R^n,
\end{equation*}
and hence
\begin{equation*}
  \Norm{\sigma(\vec{\vec\alpha},\vec{\vec\beta})}{Z}
  \lesssim\sum_{k=0}^1\abs{[\vec{\vec\alpha}]_{X_k(\vec f)}[\vec{\vec\beta}]_{Y_k(\vec g)}},\quad \forall\vec\alpha,\vec\beta\in\R^n\otimes\R^n,
\end{equation*}
In particular, choosing both $\vec{\vec\alpha}=\vec{\vec\beta}=\vec{\vec e}=\sum_{i=1}^n\vec e_i\otimes\vec e_i$ as the identity, we have
\begin{equation*}
  \sigma(\vec{\vec e},\vec{\vec e})=\tau(\vec{\vec e}\cdot\vec f,\vec{\vec e}\cdot\vec g)=\tau(\vec f,\vec g)
\end{equation*}
and, by Lemma \ref{lem:compareRedMats}
\begin{equation*}
   \abs{[\vec{\vec e}]_{X_k(\vec f)}[\vec{\vec e}]_{Y_k(\vec g)}}
   \approx\abs{[\vec f]_{X_k}[\vec g]_{Y_k}}.
\end{equation*}
Hence
\begin{equation*}
  \Norm{\tau(\vec f,\vec g)}{Z}
  =\Norm{\sigma(\vec{\vec e},\vec{\vec e})}{Z}
  \lesssim\sum_{k=0}^1 \abs{[\vec{\vec e}]_{X_k(\vec f)}[\vec{\vec e}]_{Y_k(\vec g)}}
  \approx\sum_{k=0}^1 \abs{[\vec f]_{X_k}[\vec g]_{Y_k}},
\end{equation*}
which completes the proof.
\end{proof}

Finally, we can give:

\begin{proof}[Proof of Theorem \ref{thm:mainLp}]
Let
\begin{equation*}
  X_0=X_1=F,\quad\Norm{f}{X_k}:=\Norm{A_k f}{L^{p_k}},
\end{equation*}
\begin{equation*}
 Y_0=Y_1=G,\quad\Norm{g}{Y_k}:=\Norm{B_k f}{L^{q_k}},
\end{equation*}
and $Z=L^r$. Then the assumptions of Theorem \ref{thm:2termBootstrap} are satisfied, and the said theorem gives the upper bound
\begin{equation*}
  \abs{\tau(\vec f,\vec g)}
  \lesssim\abs{[\vec f]_{X_0}[\vec g]_{Y_0}}+\abs{[\vec f]_{X_1}[\vec g]_{Y_1}}.
\end{equation*}
Moreover, each $X_k$ and $Y_k$ is a quasi-normed space related to a quasi-normed function lattice $L^{p_k}$ or $L^{q_k}$ in the sense of Definition \ref{def:lattice}. Thus Corollary \ref{cor:equiNormsQBanachFn} applies to show that
\begin{equation*}
\begin{split}
  \abs{[\vec f]_{X_k}[\vec g]_{Y_k}}
  &\approx\Norm{\vec f\odot\vec g}{X_k(Y_k)}
  =\Norm{A_k\vec f\odot B_k\vec g}{L^{p_k}(L^{q_k})} \\
  &\approx\Norm{\vec f\odot\vec g}{Y_k(X_k)}
  =\Norm{A_k\vec f\odot B_k\vec g}{L^{q_k}(L^{p_k})}.
\end{split}
\end{equation*}
\end{proof}


\end{document}